\newcolumntype{+}{!{\vrule width 2pt}}
\newlength\savedwidth
\newcommand\thickhline{\noalign{\global\savedwidth\arrayrulewidth\global\arrayrulewidth 2pt}%
\hline
\noalign{\global\arrayrulewidth\savedwidth}}
\renewcommand{\@biblabel}[1]{\quad#1.}
\newtheorem{theorem}{Theorem}
\newtheorem{lemma}{Lemma}
\newenvironment{proof}{\vspace{1ex}\noindent{\bf Proof}\hspace{0.5em}}
	{\hfill\qed\vspace{1ex}}
\def\qed{\hfill \vrule height 6pt width 6pt depth 0pt}
\newcommand{\Prob}{\mathbb P}
\newcommand{\E}{\mathbb E}
\newcommand{\R}{{\mathbb R}}
\newcommand{\onetom}{1,\cdots,m}
\newcommand{\TT}{\mathbb T}
\newcommand{\NN}{\mathbb N}
\begin{document}
\vspace*{0.2in}

% Title must be 250 characters or less.
\begin{flushleft}
{\Large
\textbf\newline{On a framework of data assimilation for neuronal networks} % Please use "sentence case" for title and headings (capitalize only the first word in a title (or heading), the first word in a subtitle (or subheading), and any proper nouns).
}
\newline
% Insert author names, affiliations and corresponding author email (do not include titles, positions, or degrees).
\\
Wenyong Zhang\textsuperscript{1},
Boyu Chen\textsuperscript{1},
Jianfeng Feng\textsuperscript{2,3},
Wenlian Lu\textsuperscript{1,4,5,6*}
\\
\bigskip
\textbf{1} School of Mathematical Sciences, Fudan University, No. 220 Handan Road, Shanghai, China
\\
\textbf{2} Institute of Science and Technology for Brain-Inspired Intelligence, Fudan University, Shanghai, China
\\
\textbf{3} Key Laboratory of Computational Neuroscience and Brain-Inspired Intelligence, Fudan University, Ministry of Education, Shanghai, China
\\
\textbf{4} Shanghai Center for Mathematical Sciences, Fudan University, Shanghai, China
\\
\textbf{5} Shanghai Key Laboratory for Contemporary Applied Mathematics, Fudan University, Shanghai, China
\\
\textbf{6} Key Laboratory of Mathematics for Nonlinear Science, Fudan University, Ministry of Education, Shanghai, China
\\

\bigskip

* wenlian@fudan.edu.cn

\end{flushleft}
% Please keep the abstract below 300 words
\section*{Abstract}
When handling real-world data modeled by a complex network dynamical system, the number of the parameters is always even much more than the size of the data. Therefore, in many cases, it is impossible to estimate these parameters and however, the exact value of each parameter is frequently less interesting than the distribution of the parameters may contain important information towards understanding the system and data. In this paper, we propose this question arising by employing a data assimilation approach to estimate the distribution of the parameters in the leakage-integrate-fire (LIF) neuronal network model from the experimental data, for example, the blood-oxygen-level-dependent (BOLD) signal. Herein, we assume that the parameters of the neurons and synapses are inhomogeneous but independently identical distributed following certain distribution with unknown hyperparameters. Thus, we estimate these hyperparameters of the distributions of the parameters, instead of estimating the parameters themselves. We formulate this problem under the framework of data assimilation and hierarchical Bayesian method, and present an efficient method named Hierarchical Data Assimilation (HDA) to conduct the statistical inference on the neuronal network model with the BOLD signal data simulated by the hemodynamic model. We consider the LIF neuronal networks with four synapses and show that the proposed algorithm can estimate the BOLD signals and the hyperparameters with good preciseness. In addition, we discuss the influence on the performance of the algorithm configuration and the LIF network model setup.

% Please keep the Author Summary between 150 and 200 words
% Use first person. PLOS ONE authors please skip this step.
% Author Summary not valid for PLOS ONE submissions.
\section*{Author summary}
With the great development of brain neuroimaging technology such as MRI, in recent decades, we can understand the brain function more generally than before and the advanced computer technology makes it possible to construct computational neuronal network models according to the brain neuroimaging data. However, the number of the parameters of the neuronal network may be very huge, usually larger than the sample size of observation data. Therefore, it is difficult to estimate all parameters accurately with avoiding the over-fitting problem at the same time. This paper proposes a novel framework, named Hierarchical Data Assimilation(HDA), based on an assumption that the observation data of the network model is independent of the specific parameters but depends on the distribution of these parameters, proves the convergence (asymptotic) and well-posedness of the HDA estimation approach in the view of hierarchical Bayesian inference, and illustrate the efficiency and power of the algorithms to estimate the BOLD signal and the hyperparameters by numerical simulating experiments. We think that this proposed algorithm is a promising method for a statistical inference method to integrate the large-scale full-brain computational model with the experimental brain data.

%\linenumbers
\section*{Introduction}\label{Introduction}
Over recent decades, research within neuroscience focuses on simulating the human brain to discover the neural mechanisms underlying perceptual, cognitive and motor functions, and the dynamical properties of the human brain that appears to guide the development of Artificial Intelligence, Psychiatry, Information Science, Cognition Sciences, etc. Besides, experiments in animals brain earlier used perturbative approaches such as implant electrodes or surgical operations, but these approaches are difficult and unethical in humans. Thus it is necessary to construct a computational neuronal network model with computers or silicon chips and design experiments on the virtual brain~\cite{2018Perturbation}. For modeling of neural dynamics, there are also quantities of the model such as MP neuron~\cite{1943A}, Hodgkin–Huxley (HH) model~\cite{1952A}, neural mass models~\cite{1972Excitatory} and other models referred to in~\cite{2018The}. Intuitively, we should choose the most appropriate model but it is impossible to use implanted electrodes to record the rates and timing of action potentials in the human brain. Fortunately, with the development of human neuroimaging research, alterations in brain activity can be detected non-invasively and at the whole-brain level using electroencephalogram (EEG), magnetoencephalogram(MEG) or functional MRI (fMRI)~\cite{2018Moving, 0Electrical, 2014Modern, 1998Colloquium, 2017Razi}. These techniques make simulating the brain computationally come true: a whole-brain neuronal network has been modeled by a normal Hopf bifurcation with resting-state MEG brain activity and BOLD(blood oxygen level dependent) signals~\cite{2017Single, 2015The}. All of these experiments used the structural connectivity (SC) to fit the functional connectivity (FC) from neuroimaging data and it is shown that the model parameters can be adjusted to describe different brain states~\cite{2017Increased}. Therefore, the model estimation based on specified datasets is an essential issue in neuronal network simulation.

Data assimilation(DA) is an efficient method aiming at simulating a dynamic system by a data-tuned mathematical model which was widely used in extensive fields. The earliest successful application of DA was to handle data in atmospheric sciences and weather forecasting which was comprehensively described in the book~\cite{1981Dynamic}. Then, the methodology of data assimilation gradually extends its application areas especially in geochemistry~\cite{1999A}, oceanography~\cite{2008A}, financial econometrics~\cite{2011Particle}, etc. With the development of DA, several specific algorithms have been established. The classic algorithm named Kalman filter (KF) provided by Kalman~\cite{1960A} is to deal with the Gaussian linear dynamic system. Some generalizations of the Kalman filter methodology such as extended Kalman filter (EKF)~\cite{2007Optimal}, unscented Kalman filter (UKF)~\cite{1997A}, ensemble Kalman filter (EnKF)~\cite{2006Data} were lately established and proved to be effective to nonlinear systems. And from the viewpoint of monte carlo (MC) sampling, there are also many significant algorithms including bootstrap filter (BF)~\cite{2002Novel}, unscented particle filter (UPF)~\cite{2001The} and sampling importance resampling (SIS)~\cite{1992Bayesian} etc. Recently, Kody Law develops a unified mathematical perspective of DA in which a Bayesian formulation of the problem provides the bedrock for the derivation and development of these algorithms~\cite{2015Data}.

Although DA has played a fundamental role in these areas, it has not been systemically applied in neuroscience. Path integral approach~\cite{2010State,2011Abarbanel,2012Abarbanel} and self-organizing state space model~\cite{2012A} have been used in parameter estimation based on HH model~\cite{2013Predicting}. Article~\cite{2016Comparing} compared estimation performance of EnKF, BF, and the optimal sequential importance re-sampling (OPT-SIRS) methods in HH model. However, since the large parameter space of HH model makes robust analysis difficult, Leaky Integral-Fired model (LIF) is another common neuron model~\cite{Stein1965A, 2006A}. Some parameter estimation algorithms for LIF model have been put forward~\cite{2008Parameter}, however, LIF model is seldom simulated via the framework of DA.

On the other hand, it is worth considering the inhomogeneity of neurons, that is, the values of the same parameter for different neurons should be different. For example, the decay/rising time scales are different from neurons; the synaptic conductances are different from synapses. It was proved by experiments that the firing activities of neurons have variability with respect to neurons \cite{1996CHAOS}. So, it is natural to believe that the parameters of neurons should be set inhomogeneous in the neuronal network model to better mimic the dynamics. In recent years, with the increase of the neuronal network scale and the computing power, the data or parameters of the model are more and more huge. Thus hierarchical Bayesian model as a novel estimation framework has been proposed and applied in many fields including computer vision~\cite{2005A}, seasonal hurricane simulation~\cite{2004A}, and solving boundary value problems in atmospheric stream function fields~\cite{2003Hierarchical}. In neuroscience, the Bayesian inference is commonly used to construct a hierarchical graphical structure in modeling cortical processing~\cite{Friston2010The,Lee2003Hierarchical}, and the size of parameters of each layer is usually small or large but with special hierarchical structures such as tree-structured models~\cite{2017Hierarchical}.

In this article, we establish a framework named hierarchical data assimilation (HDA) for estimating the hyperparameter of the dynamic equation model based on hierarchical Bayesian inference, give two algorithms as derivatives of DA methods to estimate the hyperparameter, and show the convergence of our algorithms from the view of maximum likelihood estimation (MLE). Because the number of hyperparameters and the observation have the same order of magnitude much less than model parameters, it avoids over-fitting effectively. In addition, we describe the neuron network model~\cite{2001Effects} and hemodynamic model~\cite{2000Nonlinear, 2021Dynamical} in the following and put our approaches to a test. Finally, we show it is efficient to estimate the hyperparameter of the model and fit the real fMRI signals.

\section*{Methods}\label{Methods}

In this section, we give a detailed description of our HDA frameworks, neuronal networks, and other methods to show how we can apply HDA to neuronal networks.

Here we give a brief introduction of our assimilation task and the experimental results are displayed in section \nameref{Results}. Firstly, the connections of neurons in networks are generated according to the specific network topology. Using Euler iterative method, we transform the neurons ODE model and hemodynamics model into the discrete-time equations. Then hierarchical neuron networks assimilation estimates the hyperparameters by iterating two processes: simulation, computing the hidden states including model parameters and states by numerical calculation, and assimilation, adjusting the model hidden states by HDA methods such as ensemble Kalman filter (EnKF) according to Bold observation. It should be noted that EnKF methods used ensemble samples to calculate the variance of hidden states and in this paper, if not otherwise specified, ensemble samples are 100 neuron networks with the same topology structure.

\subsection*{Hierarchical Data Assimilation}\label{sec:HDA}
Hierarchical Data Assimilation (HDA) is the crucial approach proposed in this paper based on Data Assimilation(overview in \nameref{S1_Appendix}). When the number of the parameters in the network model is much more than the size of the observation data, it is impossible to estimate these parameters and the exact values of these parameters are frequently of no interest but the distribution of the parameters may contain important information about the system and data. Here, we establish HDA framework by introducing hyper-parameter $h$ to describe the distribution of parameters, which may contain the important information about these parameters.

Let us consider the following general coupled dynamic systems:
\begin{eqnarray}
\frac{dx_{i}}{dt}=f_{i}(x_{i},x^{i},\theta_{i}),~i=\onetom.
\label{Eq:coupled dynamic}
\end{eqnarray}
Here, $x_{i}\in\R^{n}$ stands for the state vector of agent $i$, such as the collection of states in LIF model and Balloon-Windkessel model, as mentioned in \nameref{Methods}, $x^{i}=[x_{j}:~j\ne i]$ stands for the vectors from the other nodes, $\theta_{i}\in\R^{p}$ stands for the parameters of the evolution law of node $i$, which is related to states of other nodes. The observation of $x=[x_{1},\cdots,x_{m}]$ is:
\begin{eqnarray}
y_{t_{l}}=H\circ x_{t_{l}}
\label{Eq:coupled observation}
\end{eqnarray}
where $H$ is an observation operator that highly reduces the dimensions, i.e., $q\ll m\times n$, and $\{t_{l}\}_{l=1}^{L}$ are the sampling time points. When the parameters $\theta_{i}$ are assumed to be independently identical distributed (I.I.D) with the density function $\psi(\theta|h)$:
\begin{align*}
\theta_{i}\sim \psi(\cdot|h)
\end{align*}
where $h\in\R^{r}$ is the hyperparameter vector. Giving a long recording of $y$ from a large population of nodes, HDA aims at estimating the hyperparameter $h$. Define
\begin{align*}
\theta=[\theta_{1},\cdots,\theta_{n}]^{\top}
\end{align*}
to be regarded as the i.i.d. samples from $\phi(\cdot|h)$.

When dealing with real dataset, the observation is discrete-time with time step $\Delta t$, which is assumed to be sufficiently small (high sampling frequency), and so we consider the difference version of the above equations $(\ref{Eq:coupled dynamic}, \ref{Eq:coupled observation})$, by the basic Euler approach, as follows:
\begin{eqnarray}
x(t+1)=F(x(t),\theta,h),\quad
y(t)=H(x(t)),
\label{Eq:Euler network}
\end{eqnarray}
with $x(t) = [x_1(t), \cdots, x_m(t)]$. The question is how to estimate the hyperparameters from the history of observation sequence $Y_t=\{y_t\}_{i\in\TT}$, that is, how to estimate the posterior densities $\Prob(h|Y_t)$. An intuitive idea under this problem formulation is we can regard the distribution of the recording dynamic time series as a map from the distribution, in terms of the parameter of this distribution named hyperparameter, instead of from the exact values of all parameters of the model.

Noticed that:
\begin{equation*}
p(h|Y_{t})=\frac{p(h)}{p(Y_t)}p(Y_t|h)=\frac{p(h)}{p(Y_t)}\int p(Y_t|\theta, h)p(\theta|h)d\theta.
\end{equation*}

We assume that the evolution of the states does not depend the specific parameters but depend the hyperparameter. That means that there exists a infinitesimal $\varepsilon(h, m)$ such as $\frac{1}{m}$, $\forall \theta_1, \theta_2$, satisfying
\begin{align}
\int|p(Y_t|\theta_1, h)-p(Y_t|\theta_2, h)|p(\theta_1|h)d\theta_1<\varepsilon(h, m),
\label{Eq:assumption1}
\end{align}
and $\varepsilon(h, m)$ converges to zero almost everywhere when $m$ goes to infinity.
This assumption is not trivial but depends on the property of the observation function(see the \nameref{Remark}).

Then we have the following approximation using the I.I.D. samples $\theta$
\begin{align}
p(h|Y_{t})\approx\frac{p(h)}{p(Y_t)}p(Y_t|\theta, h), a.e.
\label{Eq:assumption2}
\end{align}

According to the evolution of the states by the transition probability density and Markov property, we have
\begin{align*}
p(Y_t|\theta, h)=&\int p(Y_t|X_t)p(X_t|\theta, h) dX_t\\
=&\int p(Y_t|X_t)\prod_{t=1}^{T}p(x_t|x_{t-1}, \theta, h)p(x_0)dx_0\cdots dx_t\\
=&\int p(Y_t|X_t)\prod_{t=1}^{T}p(x_t|x_{t-1}, \theta)p(\theta|h)p(x_0) dx_0\cdots dx_t
\end{align*}
From the perspective of DA, the hyperparameters $h$ are considered as states following stochastic differential equation $\frac{d}{dt}h(t)=\sigma dB_t$, which evolve discretely via random walks:
\begin{align*}
h(t+1)=h(t)+\zeta(t)
\end{align*}
where $\zeta_{t}$ is a white Gaussian noises with components independent and small variances $\sigma_{\zeta}^{2}$ which are known preliminarily. The evolution of the parameters $\theta$ is tricky. Here, we also consider the following law:
\begin{align*}
\theta(t+1)=\Phi(\theta(t), h(t+1), h(t), \eta(t)),
\end{align*}
where $\eta(t)$ are independent (w.r.t time and component) and distributed so that $\theta(t+1)$ follows the distribution with parameters $h(t+1)$. The specific realization methods and more details are discussed in next subsection.

Comparing with the approaches in DA, we consider the following two methods: ensemble Kalman filter and particle filter. Let write $[x(t), h(t)]$ by $X(t)$ for simplicity. Here we show the EnKF algorithm combined with our HDA as Algorithm \ref{algo:enkf} and other details are referred to in \nameref{S2_Appendix}.

\begin{algorithm}
	\renewcommand{\algorithmicrequire}{\textbf{Input:}}
	\renewcommand{\algorithmicensure}{\textbf{Output:}}
	\caption{EnKF combined with HDA}
	\label{algo:enkf}
	\begin{algorithmic}[1]
		\REQUIRE $N, [\mu, C_0], F_{0:T}(\cdot), \Sigma_h, \Sigma_x, \eta_{0:T}, [H, \Gamma], y_{0:T}$.
		\ENSURE $\{X_{0:T}^{(n)}\}_{n=1}^N$.
        \STATE \textbf{draw} $X_0^{(n)}\sim \mathcal{N}(\mu_0, C_0)), \theta_0^{(n)}\sim\Prob(\cdot|h_0^{(n)}), \hat{h}_0^{(n)}=h_0^{(n)}, \qquad\forall n=1:N$
		\FOR{$t=1:T$}
        \STATE \textbf{draw} $\hat{h}^{(n)}_t\sim \mathcal{N}(h_{t-1}^{(n)}, \Sigma_h),\qquad\forall n=1:N$.
        \STATE $\theta^{(n)}_t=\Phi(\theta^{(n)}_{t-1}, \hat{h}_{t}^{(n)}, \hat{h}_{t-1}^{(n)}, \eta(t)),\qquad\forall n=1:N$.
		\STATE \textbf{draw} $\hat{x}^{(n)}_t\sim \mathcal{N}(F_{t-1}(x_{t-1}^{(n)}, \theta_{t}^{(n)}), \Sigma_x),\qquad\forall n=1:N$.
		\STATE $\hat\mu_t\leftarrow\frac{1}{N}\Sigma_{n=1}^{N}\hat{X}_t^{(n)}$.
		\STATE $\hat{C}_t\leftarrow\frac{1}{N-1}\Sigma_{n=1}^{N}(\hat{X}_t^{(n)}-\hat\mu_t)(\hat{X}_t^{(n)}-\hat\mu_t)^T$.
        \STATE $S_t\leftarrow H\hat C_tH^T+\Gamma$.
        \STATE $K_t\leftarrow \hat C_tH^TS_t^{-1}$.
        \STATE \textbf{draw} $y_{t}^{(n)}\sim\mathcal{N}(y_t, \Gamma),\qquad\forall n=1:N$.
		\STATE $\delta_t^{(n)}\leftarrow y_t^{(n)}-H\hat x_t^{(n)},\qquad\forall n=1:N$.
		\STATE $X_t^{(n)}\leftarrow\hat X_t^{(n)}+K_j\delta_t^{(n)},\qquad\forall n=1:N$.
		\ENDFOR
		\STATE \textbf{return} $\{X_{0:T}^{(n)}\}_{n=1}^N$.
	\end{algorithmic}
\end{algorithm}

\subsubsection*{Remark}
In our HDA framework, a hierarchical structure is used to replace estimating the distribution of parameters by estimating hyperparameters. Thus we have not changed the topology structure of the network. Since we assume that each group of parameters sampled from the same distribution should have the same system observation, the parameters initialization of each ensemble sample is independent in Algorithm \ref{algo:enkf}. And we improve the accuracy of hyperparameters estimation by increasing the number of parameters rather than the times of parameters resampled. So it allows us not only to expand the network scale but also to simulate quickly by using limited computing resources.

% % % % % % % % % % % % % % % % % % % % % % %
\subsection*{Computational Neuronal network model}

The computational neuron model is generally a nonlinear operator from a set of input synaptic spike trains to an output axon spike train, described by three components: The subthreshold equation of membrane potential that describes the transformation from the synaptic currents of diverse synapses; the synaptic current equation describes the transformation from the input spike trains to the corresponding synaptic currents; the threshold scheme gives setup of the condition for triggering a spike by membrane potential.

We, following \cite{2001Effects}, consider the leakage integrate-and-fire (LIF) model as neuron. A capacitance-voltage (CV) equation describes the membrane potential of neuron $i$, $V_i$, when it is less than a given voltage threshold $V_{th,i}$:
\begin{align}
C_i \dot{V}_i=-g_{L,i} (V_i-V_L )+\sum_u I_{u,i} +I_{ext,i}, \qquad V_i<V_{th,i}
\label{eq:LIF1}
\end{align}
where $C_i$ is the capacitance of the neuron membrane, $g_{L,i}$ is the leakage conductance, $V_L$ is leakage voltage, $I_{u,i}(t)$ stands for the synaptic current at the synapse type $u$ of neuron $i$ and $I_{ext,i}$ is the external stimulus from the environment and we set $I_{ext,i}=0~nA$ in this paper. Here, we consider at least four synapse types: $AMPA$, $NMDA$, $GABA_A$ and $GABA_B$~\cite{2016A}.

As far the synapse model, We consider an exponentially temporal convolution as this map:
\begin{align}
\begin{array}{ll}
I_{u,i}&=g_{u,i}(V_{u,i}-V_i)J_{u,i}\\
\dot{J}_{u,i}&=-\frac{J_{u,i}}{\tau_{u,i}}+\sum_{k,j}w_{ij}^u\delta(t-t_k^j)
\end{array}
\label{eq:LIF2}
\end{align}
Here, $g_{u,i}$ is the conductance of synapse type $u$ of neuron $i$, $V_u$ is the voltage of synapse type $u$, $\tau_{u,i}$ is the time-scale constants of synapse type $u$ of neuron $i$, $w_{ij}^u$ is the connection weights from neuron $j$ to $I$ of synapse type $u$, $\delta(\cdot)$ is the Dirac-delta function and $t_k^j$ is the time point of the $k-th$ spike of neuron $j$.

When $V_i=V_{th,i}$ at $t=t_n^i$, neuron registers a spike at time point $t_n^i$ and the membrane potential is reset at $V_{rest}$ during a refractory period:
\begin{align}
V_i(t)=V_{rest},t\in[t_k^i,t_k^i+T_{ref}].
\label{eq:LIF3}
\end{align}
After then, $V_i$ is governed by CV equation again.

Based on the above neuron model, the neuronal network is composed of $N_{E}=4/5 N$ excitatory neurons, denoted by $\mathcal N_{E}$ and $N_{I}=1/5 N$ inhibitory neurons, denoted by $\mathcal N_{I}$. Let $W_u=[w^u_{ij}]_{i,j=1}^{N}$ be the adjacent matrix of the synapse type $u$ where $w^u_{ij}\neq0$ means there is a link from neuron $j$ to $i$ and the conductance of this link is $w^u_{ij}$, otherwise $w^u_{ij}=0$. Each neuron has $D$ neighbors and the network graph topology is generated by the following rule: each neuron randomly chooses a synaptic junction from $4/5D$ excitatory neurons as excitatory synapses and $1/5D$ inhibitory neurons as inhibitory synapses. Thus the edge of each $u$-synapse graph is $N\times D$. For convenience, we transform these ODE into the iterated map by the Euler method of time step 1 msec and calculate the mean of neuron activity in the neuronal network as output:
\begin{align}
z(t)=\frac{1}{N}\sum_{i\in N, k\in \NN}\delta(t-t^i_k).
\label{eq:LIF4}
\end{align}

% % % % % % % % % % % % % % % % % % % % % % %
\subsection*{BOLD signal and hemodynamical model}

FMRI is the widely used towards human objects as BOLD signal that indirectly measures the neural activity in a low spatial and temporal resolution. A useful mathematical model is the hemodynamical model that takes the neural activity quantified by the spike rate of a pool of neurons and outputs bold signal. Let $z_t$ be the time series of neural activity from Eq~(\ref{eq:LIF4}) and hemodynamical model can be generally written as $\dot{g}=G(g, z)$.
Here $g$ denotes the states related to blood volume and blood oxygen consumption. And the bold signal is read as a function of $g$ i.e., $y(t)=h(g(t))$. There are diverse hemodynamical models and herein, we introduce the Balloon-Windkessel model~\cite{2000Nonlinear} as shown:
\begin{align}
\left\{
\begin{array}{lr}
\dot{s}_i=\varepsilon z_i-\kappa s_i-\gamma(f_i-1)\\
\dot{f}_i=s_i\\
\tau\dot{v}_i=f_i-v_i^{\frac{1}{\alpha}}\\
\tau\dot{q}_i=\frac{f_i(1-(1-\rho)^{\frac{1}{f_i}})}{\rho}-\frac{v_i^{\frac{1}{\alpha}}q_i}{v_i}\\
y_i=V_0[k_1(1-q_i)+k_2(1-\frac{q_i}{v_i})+k_3(1-v_i)]\\
\end{array}
\right.
\label{eq:BOLD}
\end{align}

Here, $i$ is the label of ROI/voxel, $s_i$ is vasodilatory signal, $f_i$ is blood inflow, $v_i$ is blood volume and $q_i$ is deoxyhemoglobin content. $\varepsilon, \kappa, \tau, \gamma, \alpha, \rho, V_0, k_1, k_2, k_3$ are the constant parameter of the Balloon-Windkessel model. All parameters of constant term in neuronal network model and hemodynamic model are set in Table~\ref{Table:1}.

% Place tables after the first paragraph in which they are cited.
\begin{table}[!ht]
\begin{adjustwidth}{0in}{0in} % Comment out/remove adjustwidth environment if table fits in text column.
\centering
\caption{
{\bf Parameters in Neuronal Network Model and Hemodynamic Model.}}
\begin{tabular}{|c|c+c|c|}
\hline
\multicolumn{2}{|c+}{\bf Neuronal Network Model} & \multicolumn{2}{c|}{\bf Hemodynamic Model}\\ \thickhline
$\bf symbol$ & $\bf value$ & $\bf symbol$ & $\bf value$\\ \hline
$C_i$ & 1$\mu f$ & $\varepsilon$ & 200\\ \hline
$g_{L,i}$ & 0.03$mS$ & $\kappa$ & 1.25\\ \hline
$V_L$ & -75$mV$ & $\gamma$ & 2.5\\ \hline
$V_{th}$ & -50$mV$ & $\alpha$ & 0.2\\ \hline
$V_{rest}$ & -65$mV$ & $\rho$ & 0.8\\ \hline
$[V_{AMPA,i},V_{NMDA,i},V_{GABA_a,i},V_{GABA_b,i}]$ & [0,0,-70,-100]$mV$ & $V_0$ & 0.02\\ \hline
$[\tau_{AMPA,i},\tau_{NMDA,i},\tau_{GABA_a,i},\tau_{GABA_b,i}]$ & [2,40,10,50] & $\tau$ & 1\\ \hline
$w_{i,j}^u$ & $\sim$ rand(0,1) & $k_1$ & 5.6\\ \hline
$T_{ref}$ & 5$ms$ & $k_2$ & 2\\ \hline
~ & ~ & $k_3$ & 1.4\\ \hline
\end{tabular}
\begin{flushleft}
Table shows parameters of constant term in Eq~(\ref{eq:LIF1}), Eq~(\ref{eq:LIF2}), Eq~(\ref{eq:LIF3}) and Eq~(\ref{eq:BOLD}). $C_i$ is the capacitance of the neuron membrane, $g_{L,i}$ is the leakage conductance, $V_L$ is leakage voltage, $V_{th}$ is threshold voltage, $V_{rest}$ is rest voltage after spike, $V_u$ is the voltage of synapse type $u$, $\tau_{u,i}$ is the time-scale constants of of synapse type $u$, $u\in \{AMPA, NMDA, GABA_a, GABA_b\}$, $w_{i,j}$ is I.I.D from a uniform distrition over $(0, 1)$. Parameters in hemodynamic model are referred in detail in~\cite{2000Nonlinear}.
\end{flushleft}
\label{Table:1}
\end{adjustwidth}
\end{table}

Furthermore, to mimic the time series of BOLD signals that were measured by the fMRI, we conduct a down-sampling process of the recording data:
\begin{align*}
TS(l)=y(t_l)+\eta(l),~l=1,2,\cdots,L
\end{align*}
where $\eta(l)$ is a white Gaussian noise with $cov(\eta(l)\eta(s))=\sigma_{n}\delta(l-s)$ with $\sigma_{n}=10^{-8}$, $t_{l}$, $l=1,2,\cdots,L$ are sampling time points that are periodic with frequency $1.25$~HZ, as same as frequency in fMRI.

% % % % % % % % % % % % % % % % % % % % % % %

\subsection*{Sampling parameters from the hyperparameter}

Since our algorithm is based on the framework of Hierarchical Bayesian Inference, it is necessary to introduce how hyperparameter determines parameters in the algorithm. As mentioned in section \nameref{sec:HDA}, the evolution of the parameters $\theta$ should be tricky because it should ensure that the parameter follows the distribution with hyperparameter $h$. Here, we give two methods to adjust the parameter.

The first one is simple and effective by using the cumulative density function (cdf) and its inverse function. As we can see, the update rule of parameter in the previous subsection is according to the following equation:
\begin{align}
\tilde{g}_{i}(t+1)= \mathcal{F}_{h(t+1)}^{-1}\circ\mathcal{F}_{h(t)}\left(g_{i}(t)\right),
\label{par_ite}
\end{align}
where $\mathcal{F}_{c}(\cdot)$ denotes the cdf by exponential distribution with mean $c$. Therefore if the parameter follows the specific distribution, we just use the cdf to map $\theta$ to $\vartheta = \mathcal{F}_{h(t)}(\theta)$, following uniform distribution and hold random variables $\vartheta$ in this probability space. Then the map $\vartheta\rightarrow\hat\theta=\mathcal{F}_{h(t+1)}^{-1}(\vartheta)$ ensure $\hat\theta$ following the distribution with new hyperparameter $h(t+1)$.

And the other method, finding a transform rule from the $\theta_{t-1}$ to $\theta_t$ based on the old probability density function(pdf) $p_0(\cdot)$ and the new one $q(\cdot)$ according to MCMC method, is to deal with the situation when the cdf is difficult to be calculated numerically, such as Gussian distribution or Gamma distribution. Denote the parameters of the last iteration as $\theta=(\theta_1, \cdots, \theta_N)$ and the new parameters as $\hat\theta$. We aim to make the distribution of new parameter as close as possible to new distribution $\mu_q$. By probability theory, the pdf of $\hat\theta$ becomes $p_{\hat{\theta}}(x)=\Sigma_{n=1}^{N}\frac{1}{N}\mathbb{I}_{\{x=\hat{\theta}\}}$. The algorithm is as follow Algorithm \ref{algo:mcmc}.

\begin{algorithm}
	\renewcommand{\algorithmicrequire}{\textbf{Input:}}
	\renewcommand{\algorithmicensure}{\textbf{Output:}}
	\caption{Transformation based on MCMC method}
	\label{algo:mcmc}
	\begin{algorithmic}[1]
		\REQUIRE $\theta_0=(\theta_0^1, \cdots, \theta_0^N), p_0(\cdot), q(\cdot), T, \Sigma_0$
		\ENSURE $\hat\theta$
		\FOR{$t=0:T-1$}
        \FOR{$n=1:N$}
		\STATE \textbf{draw} $\theta^n_{t+1}\sim \mathcal{N}(\theta^n_{t}, \Sigma_0),$
		\STATE \textbf{calculate} $q_{(\theta^n_{t})}$ and $q_{(\theta^n_{t+1})}$
		\STATE \textbf{draw} $\mu \sim Union(0,1)$
		\STATE \textbf{let} $\theta^n_{t}=\theta^n_{t+1},\qquad$ if $\mu \leqslant \frac{q_{(\theta^n_{t+1})}}{q_{(\theta^n_{t})}}$
        \ENDFOR
        \STATE \textbf{denote} $\theta_{t+1}=(\theta^1_{t+1}, \cdots, \theta^N_{t+1})$
		\ENDFOR
		\STATE \textbf{return} $\hat\theta=\theta_{T}$
	\end{algorithmic}
\end{algorithm}

Fig~\ref{Fig:MCMC}(a) illustrates performance of Algorithm~\ref{algo:mcmc} at $N=1000$ with $T=1000$ to generate Gamma distribution $\Gamma(2, 1)$. According to this method, it allows us to choose different distribution of parameters to model the brain network (See more details in \nameref{Discussion}).

\begin{figure}[ht]
\includegraphics[width=.9\textwidth]{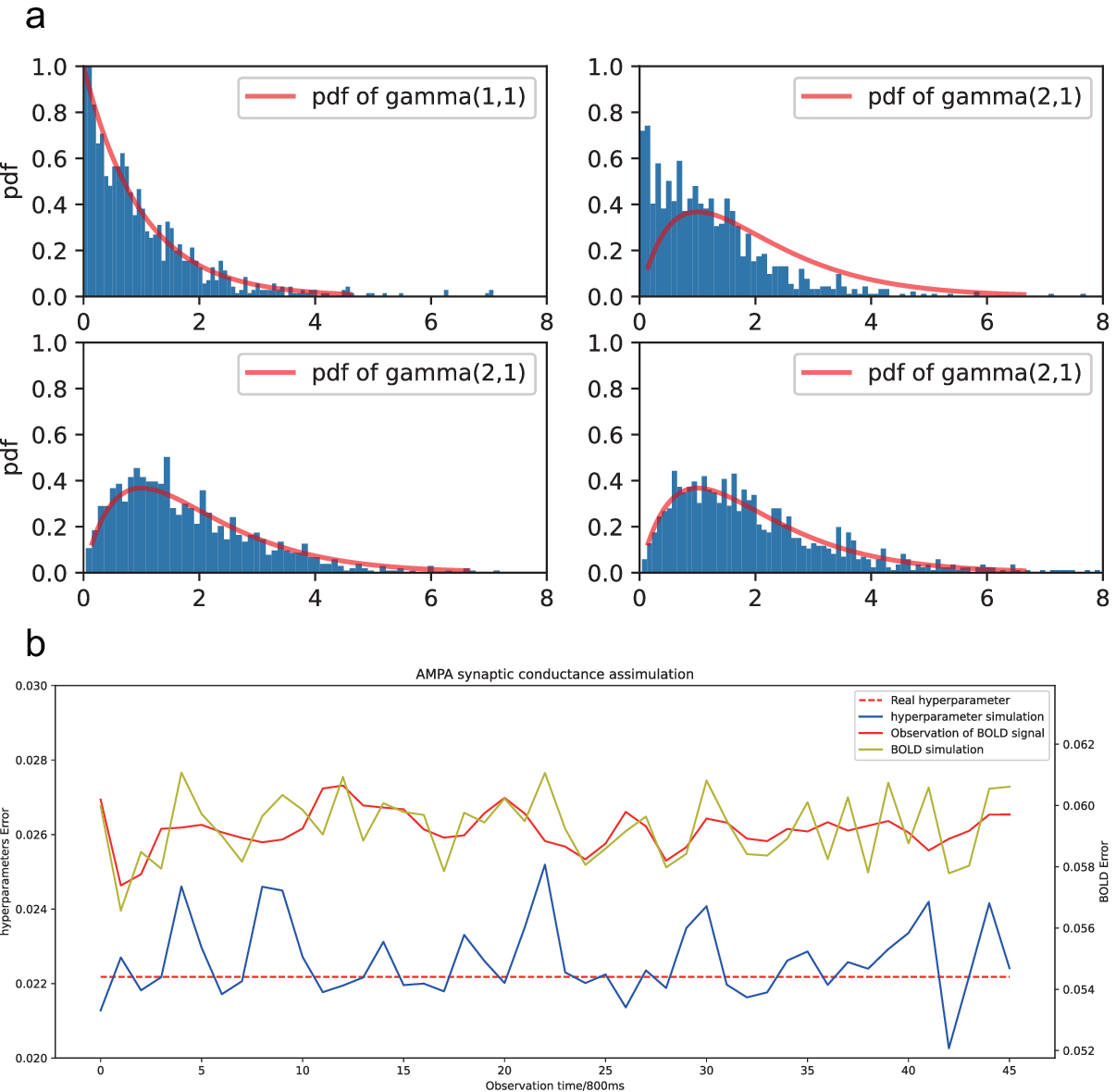}\\
\caption{\bf Sampling parameters from the hyperparameter via MCMC Method.}
{\bf a}: These four pictures show the change from exponential distribution to Gamma distribution during MCMC method iteration and the iteration are 0, 1, 10, 100 respectively.   {\bf b} Based on MCMC method, HDA can also assimilite arbitrarily distribution of parameters effectively.
\label{Fig:MCMC}
\end{figure}

% % % % % % % % % % % % % % % % % % % % % % %
% % % % % % % % % % % % % % % % % % % % % % %
\section*{Results}\label{Results}
%In this section, we utilise our HDA algorithm to estimate the parameter of a large-scale synaptically coupled neuronal network model composed of $N$ neurons. As we show in previous section, the parameter scale of neuronal network may be larger than the observation scale. And the blood oxygen level dependent (BOLD) signal, as observation of neuronal network, can be measure by functional magnetic resonance imaging (fMRI), which are time series on each voxel/region and of very low frequency. Thus, we do not consider the hemodynamics at the level of milliseconds but periodic sampling time with a fixed frequency(1.25hz in this paper). We have given a detailed description of the model in Method.
%it has been shown that HDA is efficial to simulation the bold sigmal and synaptic transmitter conductivity in the above sections. But

% % % % % % % % % % % % % % % % % % % % % % %
\subsection*{Asymptotic and well-posedness analyses}

In this subsection, we give mathematical proof of the convergence (asymptotics) of the HDA estimation process in the view of MLE.

Let $Y$ be an $\mathbb{R}^m$ -valued random variable, whose probability distribution is defined by a series of random variable $\Theta_m=[\theta_1, \cdots, \theta_m]$. Meanwhile, $[\theta_1, \cdots, \theta_m]$ called parameters are i.i.d. following a specific distribution $\psi(\cdot|h)$ with hyperparameter $h$. Thus the pdf of $Y$ can be represented by conditional probability:
\begin{equation}
\label{pdf}
\Prob (y=Y|\Theta_m,h).
\end{equation}
It is assumed that $(Y_1, \cdots, Y_n)$ called available data is a set of independent realizations of $Y$, for example coming from observations or numerical simulations. To simplify the presentation, it is assumed that there is no noise in the data. We try to prove hyperparameter $h$ can be estimated as the size of parameters and the times of observation of data goes to infinity. Since we only have information that parameters are sampled from a given distribution, it can not get post distribution accurately. so it is necessary to construct a likelihood function that depends on an unknown hyperparameter $h$ firstly.

Let $\hat{h}_{n,m}$ be the solution of the following optimization problem,
\begin{align}
\hat{h}_{n,m} = \arg\min L(h,Y_1,\cdots,Y_n,\Theta_m),
\label{mle1}
\end{align}
in which $Y_i$ is sample of state $Y$ and $L(\cdot)$ is the log-likelihood function defined by
\begin{align}
L(h,Y_1,\cdots,Y_n,\Theta_m) &=-\frac{1}{N}\sum_{i=1}^N L(h,Y_i,\Theta_m^i)\notag\\
&= -\frac{1}{N}\sum_{i=1}^N \log \Prob(Y_i|h,\Theta_m^i)
\label{mle2}
\end{align}
If the log-likelihood function is differentiable and $\hat{h}_{n,m}$ is a global maximum of the optimization problem, we have
\begin{equation}
\frac{\partial}{\partial h}L(h,Y_1,\cdots,Y_n,\Theta_m)|_{h = \hat{h}_{n,m}} = 0.
\label{mle3}
\end{equation}

For the equation $\ref{mle3}$, we have a Taylor expansion for $h$ in the neighborhood $\mathcal{N}(\tilde{h})$ about the point $h = \tilde{h}$, we assume $h\in\R$ for simplify, as follow
\begin{equation}
\begin{aligned}
\frac{\partial}{\partial h}L(h,Y_1,\cdots,Y_n,\Theta_m)&=\frac{\partial}{\partial h}L(h,Y_1,\cdots,Y_n,\Theta_m)|_{h = \tilde{h}}\\
&+(h-\tilde{h})\frac{\partial^2}{\partial h^2}L(h,Y_1,\cdots,Y_n,\Theta_m)|_{h = \tilde{h}}\\
&+\frac{1}{2}\zeta(h-\tilde{h})^2H(Y_1,\cdots,Y_n, \tilde{h}),
\label{Taylor expansion}
\end{aligned}
\end{equation}
where $\zeta\in(0,1)$ and for all $Y$,
\begin{align*}
|\frac{\partial^3}{\partial h^3}L(h,Y_1,\cdots,Y_n,\Theta_m)|\leqslant H(Y_1,\cdots,Y_n, h).
\end{align*}

We assume:
\begin{itemize}
\item[R1.] For each $h\in\mathcal{N}(\tilde{h})$,
\begin{align*}
\frac{\partial}{\partial h}L(h,Y,\Theta_m),\frac{\partial^2}{\partial h^2}L(h,Y,\Theta_m),\frac{\partial^3}{\partial h^3}L(h,Y,\Theta_m),
\end{align*}
exist, for all $Y$.
\item[R2.] For each $h\in\mathcal{N}(\tilde{h})$, there exist functions $f(Y), g(Y), H(Y)$ (possibly depending on h) such that the relations
\begin{align*}
|\frac{\partial}{\partial h}\Prob(Y|h,\Theta_m)|\leqslant f(Y), |\frac{\partial^2}{\partial h^2}\Prob(Y|h,\Theta_m)|\leqslant g(Y), |\frac{\partial^3}{\partial h^3}L(h,Y,\Theta_m)|\leqslant H(Y)
\end{align*}
a.s. hold, for all $Y$, and
\begin{align*}
\int f(Y)dY<\infty, \int g(Y)dY<\infty, \E_Y[H(Y)]<\infty.
\end{align*}
\item[R3.] For all $\Theta_m$ and $\Phi_m$ following the distribution $\psi(\cdot|h)$ with same hyperparameter $h$, then there exist $k, K, K_{\infty}$ satisfying
\begin{align*}
%0<k\leqslant\int[\frac{\partial}{\partial h}L(h, Y, \Theta_m)]^2\Prob(Y|h, \Theta_m)dY\leqslant K<\infty, \\
&0<k\leqslant\int[\frac{\partial}{\partial h}L(h, Y, \Theta_m)]^2\Prob(Y|h, \Phi_m)dY\leqslant K<\infty, &a.s.\\
&0<k\leqslant\int[\frac{\partial}{\partial h}L(h, Y, \Theta_m)]^4\Prob(Y|h, \Phi_m)dY\leqslant K<\infty, &a.s.\\
&\lim_{m\rightarrow\infty}\int[\frac{\partial}{\partial h}L(h, Y, \Theta_m)]^2\Prob(Y|h, \Theta_m)dY= K_{\infty}, &a.s.
\end{align*}
\item[R4.] The Hellinger distance between two distributions is defined by
\begin{align*}
d_{Hell}(\Prob_1, \Prob_2) = \big(\frac{1}{2}\int(\sqrt{p_1(y)}-\sqrt{p_2(y)})^2dy\big)^{\frac{1}{2}}.
\end{align*}
We assume there exists a distribution $\Prob(Y|h)$ satisfies that for arbitrary distribution $\Prob(Y|h, \Theta_m)$,
\begin{align*}
d_{Hell}(\Prob(Y|h, \Theta_m), \Prob(Y|h)) \leqslant \varepsilon(m, h)\xrightarrow[m\rightarrow\infty]{a.s.} 0,
\end{align*}
i.e. $\varepsilon(m, h)$ converges to zero almost everywhere when m goes to infinity.
Thus, for arbitrary two distributions $\Prob(Y|h, \Theta_m)$ and $\Prob(Y|h, \Phi_m)$, we have
\begin{align*}
d_{Hell}(\Prob(Y|h, \Theta_m), \Prob(Y|h, \Phi_m)) \leqslant 2\varepsilon(m)\xrightarrow[m\rightarrow\infty]{a.s.} 0.
\end{align*}
\end{itemize}

When $\{Y_i\}$ drawn from pdf $\Prob(\cdot|\tilde{\theta}, \tilde{h})$ with hyperparameter $\tilde{h}$ and parameter $\tilde{\theta} = [\tilde{\theta}_1, \cdots, \tilde{\theta}_m]$ which are the real hyper-parameter and parameters of the specific dynamic system, Theorem ~\ref{Thm:1} shows $\tilde{h}$ and $\hat{h}_{n,m}$ are very close to each other as $m,n\rightarrow\infty$ and the prove is in \nameref{S3_Appendix}.

\begin{theorem}
Assume conditions (R1-R4), the solution of equation ($\ref{mle3}$) $\hat{h}_{n,m}$ and the real hyperparameter $\tilde{h}$ satisfy that $|\tilde{h} - \hat{h}_{n,m}|$ converges to zero as $m,n$ goes to infinity.
\label{Thm:1}
\end{theorem}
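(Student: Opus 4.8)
The plan is to run the classical Cram\'er-type consistency argument for maximum-likelihood estimators, but carried out for the hierarchical working likelihood $\Prob(Y|h,\Theta_m)$ --- which is built from a random parameter draw $\Theta_m$ rather than from the marginal $\Prob(Y|h)$ --- and to show that the two discrepancies this introduces are negligible. Starting from the stationarity condition $(\ref{mle3})$ and the third-order Taylor expansion $(\ref{Taylor expansion})$ of $\frac{\partial}{\partial h}L$ about $\tilde h$, substituting $h=\hat h_{n,m}$ annihilates the left-hand side, so that
\[
\hat h_{n,m}-\tilde h \;=\; \frac{-\,\frac{\partial}{\partial h}L(h,Y_1,\cdots,Y_n,\Theta_m)\big|_{\tilde h}}{\frac{\partial^2}{\partial h^2}L(h,Y_1,\cdots,Y_n,\Theta_m)\big|_{\tilde h}\;+\;\tfrac12\zeta\,(\hat h_{n,m}-\tilde h)\,H(Y_1,\cdots,Y_n,\tilde h)}.
\]
It then suffices to prove that the numerator converges to $0$ and that the denominator stays bounded away from $0$ as $m,n\to\infty$.

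For the numerator, write $\frac{\partial}{\partial h}L|_{\tilde h}=-\frac1n\sum_{i=1}^n\frac{\partial}{\partial h}\log\Prob(Y_i|\tilde h,\Theta_m^i)$. For each fixed realization $\Theta_m^i$ the score is centred against its own law, $\int\frac{\partial}{\partial h}\Prob(y|\tilde h,\Theta_m^i)\,dy=\frac{\partial}{\partial h}\!\int\Prob(y|\tilde h,\Theta_m^i)\,dy=0$, differentiation under the integral being licensed by the dominating function $f$ in (R2). Since the data $Y_i$ actually follow $\Prob(\cdot|\tilde h,\tilde\theta)$, I would then invoke (R4): the laws $\Prob(\cdot|\tilde h,\tilde\theta)$ and $\Prob(\cdot|\tilde h,\Theta_m^i)$ are within $2\varepsilon(m,\tilde h)$ of each other in Hellinger distance, and the factorization $g\,(p_1-p_2)=g\,(\sqrt{p_1}+\sqrt{p_2})\,(\sqrt{p_1}-\sqrt{p_2})$ followed by Cauchy--Schwarz and the uniform $L^2$ bound $K$ of (R3) yields $\E_{Y\sim\Prob(\cdot|\tilde h,\tilde\theta)}[\frac{\partial}{\partial h}\log\Prob(Y|\tilde h,\Theta_m^i)]=O(\varepsilon(m,\tilde h))$, which $\to0$ a.s. The fluctuation about this mean is then controlled by a law-of-large-numbers argument over the i.i.d.\ pairs $(Y_i,\Theta_m^i)$: their second and fourth moments are bounded uniformly in $m$ by (R3), so the variance is $O(1/n)$ and the fourth central moment is $O(1/n^2)$, and Chebyshev plus Borel--Cantelli give a.s.\ convergence of the numerator to $0$.

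For the denominator I would use the information identity. From $\frac{\partial^2}{\partial h^2}\log p=\frac{\partial_h^2 p}{p}-\big(\frac{\partial}{\partial h}\log p\big)^2$ and integration against $p=\Prob(\cdot|\tilde h,\Theta_m^i)$ --- the first piece vanishing by the dominating function $g$ of (R2) --- one obtains $-\E_p[\frac{\partial^2}{\partial h^2}\log p]=\int\big(\frac{\partial}{\partial h}L(\tilde h,y,\Theta_m^i)\big)^2 p(y)\,dy$, which by the limit clause of (R3) converges to $K_\infty$ and lies in $[k,K]$; the same Hellinger/Cauchy--Schwarz estimate as above transfers this to the true data law at cost $O(\varepsilon(m,\tilde h))$, and averaging over $i$ converges by the LLN, so $\frac{\partial^2}{\partial h^2}L|_{\tilde h}\to K_\infty\ge k>0$ a.s. Since moreover $H(Y_1,\cdots,Y_n,\tilde h)=\frac1n\sum_iH(Y_i)\to\E_Y[H(Y)]<\infty$ by (R2) and the LLN, after localizing to a sub-neighborhood of $\tilde h$ inside $\mathcal N(\tilde h)$ on which $|\hat h_{n,m}-\tilde h|$ is a priori bounded, the self-referential remainder $\tfrac12\zeta(\hat h_{n,m}-\tilde h)H(\cdots)$ is dominated and the denominator is $\ge k/2$ for all large $m,n$; combining with the numerator estimate gives $|\hat h_{n,m}-\tilde h|\le o(1)/(k/2)\to0$.

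The genuinely non-standard difficulty --- and the step I expect to be the main obstacle --- is the systematic mismatch between the observation law $\Prob(\cdot|\tilde h,\tilde\theta)$ and the family $\{\Prob(\cdot|h,\Theta_m^i)\}$ used to build the likelihood: the score is no longer exactly centred and the observed information is no longer exactly the model Fisher information. Handling both discrepancies simultaneously is exactly what the technical package (R2)--(R4) is designed for, and it is the Hellinger-contraction hypothesis (R4) --- the rigorous form of the modelling assumption $(\ref{Eq:assumption1})$ that the observable depends on the hyperparameter but not on the individual parameters --- that makes the two correction terms $o(1)$. Everything else (differentiation under the integral, the moment bookkeeping behind the LLN and Borel--Cantelli steps, and disposing of the remainder term by working in a small neighborhood) is routine once (R2)--(R4) are granted.
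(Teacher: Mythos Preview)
Your proposal is correct and follows essentially the same route as the paper's proof: the paper likewise Taylor-expands the score at $\tilde h$, writes $\hat h_{n,m}-\tilde h$ as the ratio $A_{n,m}/(B_{n,m}+\tfrac12\zeta(\hat h-\tilde h)C_{n,m})$, proves a Hellinger/Cauchy--Schwarz lemma of exactly the form you describe to transfer the score and information integrals from $\Prob(\cdot|\tilde h,\Theta_m)$ to $\Prob(\cdot|\tilde h,\tilde\Theta_m)$ at cost $O(\varepsilon(m))$, and then invokes the law of large numbers and Slutsky. The only noticeable difference is that the paper settles for convergence in probability (WLLN and Slutsky) whereas you push to a.s.\ convergence via the fourth-moment bound in (R3) and Borel--Cantelli; and you flag explicitly the localization needed to handle the self-referential remainder $\tfrac12\zeta(\hat h-\tilde h)C_{n,m}$, which the paper simply absorbs into its Slutsky step without comment.
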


On the other hand, if we have two different simulation of the data, $Y^{'}$ and $Y^{''}$ respectively, the difference of hyperparameter estimator based on different observations, $|\tilde h^{'}-\tilde h^{''}|$, can be controlled by the $|Y^{'}-Y^{''}|$~\cite{2015Data}. It means our estimator given by MLE depends continuously on the observation data, shown in Theorem~\ref{Thm:2} and the prove is in \nameref{S4_Appendix}.

\begin{theorem}
Let $x$ denote all model states including hyperparameter and Bold model states, $\mu$ and $\mu'$ be the posterior probability distributions of model states associated with two different data sets $Y=y_{1:\TT}$ and $Y^{'}=y'_{1:\TT}$ respectively, $\mu_0$ be the prior measure on $x$. Assume that $\mathbb{E}_{\mu_0}w(x_{1:\TT}) < \infty$, where $w(x_{1:\TT})$ is given by
\begin{align}
w(x_{1:\TT}) = \Sigma_{t=1}^\TT(1+|H(x_t)|^2).
\end{align}
Then there exists $c = c(r)$ such that for all $|y_{1:\TT}|, |y'_{1:\TT}| < r$,
\begin{align}
d_{Hell}(\mu, \mu')\leqslant c|y_{1:\TT}-y'_{1:\TT}|.
\end{align}
\label{Thm:2}
\end{theorem}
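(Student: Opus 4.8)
The plan is to recognise $\mu$ as the solution of a Bayesian inverse/filtering problem on the path space of the trajectory $x_{1:\TT}$ and to run the standard Hellinger well-posedness argument of~\cite{2015Data}. Writing $\Gamma$ for the covariance of the Gaussian observational noise built into~(\ref{Eq:Euler network}) and Algorithm~\ref{algo:enkf}, the likelihood of the data $Y=y_{1:\TT}$ given the trajectory factorises over $t$, so $\frac{d\mu}{d\mu_0}(x_{1:\TT})=Z(Y)^{-1}\exp\!\big(-\Phi(x_{1:\TT};Y)\big)$ with data-misfit functional
\begin{equation*}
\Phi(x_{1:\TT};Y)=\frac{1}{2}\sum_{t=1}^{\TT}\big|\Gamma^{-1/2}(y_t-H(x_t))\big|^2,\qquad Z(Y)=\E_{\mu_0}\exp\!\big(-\Phi(x_{1:\TT};Y)\big),
\end{equation*}
and likewise for $\mu'$ with $Y'$. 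The whole argument reduces to establishing: (i) $\Phi\ge0$; (ii) a local Lipschitz bound $|\Phi(x;Y)-\Phi(x;Y')|\le c(r)\big(1+w(x_{1:\TT})^{1/2}\big)|Y-Y'|$ whenever $|Y|,|Y'|<r$; and (iii) a uniform positive lower bound $Z(Y)\ge Z_{\min}(r)>0$ for $|Y|<r$ (with $Z(Y)\le1$ trivially).

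For (ii) I would expand $|a|^2-|b|^2=(a-b)\cdot(a+b)$ with $a=\Gamma^{-1/2}(y_t-H(x_t))$, $b=\Gamma^{-1/2}(y_t'-H(x_t))$, bound $|a+b|\le\|\Gamma^{-1/2}\|\big(2r+2|H(x_t)|\big)$, and apply Cauchy--Schwarz in $t$ to the resulting sum, using $\sum_t|y_t-y_t'|\,|H(x_t)|\le|Y-Y'|\,w(x_{1:\TT})^{1/2}$ and $\sum_t|y_t-y_t'|\le\sqrt{\TT}\,|Y-Y'|$. Since $\E_{\mu_0}w(x_{1:\TT})<\infty$ is assumed, Jensen's inequality gives $\E_{\mu_0}w(x_{1:\TT})^{1/2}<\infty$, which is the only integrability needed downstream. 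The same expansion also yields $\Phi(x;Y)\le\|\Gamma^{-1}\|\big(r^2+w(x_{1:\TT})\big)$ for $|Y|<r$, which feeds (iii): by Markov's inequality there is $R=R(r)$ with $\mu_0\big(w(x_{1:\TT})\le R\big)\ge\frac12$, and on that set $\exp(-\Phi(x;Y))\ge\exp\!\big(-\|\Gamma^{-1}\|(r^2+R)\big)$, so $Z(Y)\ge\frac12\exp\!\big(-\|\Gamma^{-1}\|(r^2+R)\big)=:Z_{\min}(r)>0$.

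With (i)--(iii) in hand the Hellinger estimate is routine. Starting from the definition of $d_{Hell}$ in (R4),
\begin{equation*}
2\,d_{Hell}(\mu,\mu')^2=\int\Big(Z(Y)^{-1/2}e^{-\Phi(x;Y)/2}-Z(Y')^{-1/2}e^{-\Phi(x;Y')/2}\Big)^2 d\mu_0(x_{1:\TT}),
\end{equation*}
I split the integrand via $u/\sqrt{Z}-u'/\sqrt{Z'}=(u-u')/\sqrt{Z}+u'\big(Z^{-1/2}-Z'^{-1/2}\big)$ and $(a+b)^2\le2a^2+2b^2$ into a piece controlled by $Z_{\min}(r)^{-1}\int\big(e^{-\Phi(x;Y)/2}-e^{-\Phi(x;Y')/2}\big)^2d\mu_0$ and a piece controlled by $\big|Z(Y)^{-1/2}-Z(Y')^{-1/2}\big|^2$. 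Using $|e^{-s/2}-e^{-t/2}|\le\frac12|s-t|$ for $s,t\ge0$ together with (ii), the first integrand is $\lesssim(1+w(x_{1:\TT}))|Y-Y'|^2$ pointwise, hence integrable; the same inequality gives $|Z(Y)-Z(Y')|\le\frac12\int|\Phi(x;Y)-\Phi(x;Y')|d\mu_0\lesssim|Y-Y'|$, and the mean value theorem for $z\mapsto z^{-1/2}$ on $[Z_{\min}(r),1]$ converts this into $\big|Z(Y)^{-1/2}-Z(Y')^{-1/2}\big|\lesssim Z_{\min}(r)^{-3/2}|Y-Y'|$. Collecting constants (depending only on $r,\TT,\|\Gamma^{-1}\|,\E_{\mu_0}w$) gives $d_{Hell}(\mu,\mu')\le c(r)|Y-Y'|$.

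I expect the only genuine obstacle to be step (iii): the \emph{uniform} positive lower bound on the normalising constants over the ball $\{|Y|<r\}$. This is precisely where the hypothesis $\E_{\mu_0}w(x_{1:\TT})<\infty$ is essential, since it is what keeps $\Phi(\cdot;Y)$ finite with $\mu_0$-probability bounded below and thereby prevents $\mu$ from degenerating as $Y$ varies; everything else is bookkeeping once the Gaussian observation model is read off from~(\ref{Eq:Euler network}) and Algorithm~\ref{algo:enkf}. One minor point to flag in the writeup: $H$ here acts time-slicewise ($y_t=H(x_t)$), so the path-space misfit is genuinely the additive sum over $t$ displayed above, which is exactly what makes $w(x_{1:\TT})=\sum_{t=1}^{\TT}(1+|H(x_t)|^2)$ the natural weight.
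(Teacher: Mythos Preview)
Your proposal is correct and follows essentially the same route as the paper's own proof, which explicitly imports the Hellinger well-posedness argument from~\cite{2015Data}: both write the posteriors as $Z^{-1}\exp(-\Phi)\,d\mu_0$ with the additive Gaussian misfit, split the squared Hellinger distance into an $e^{-\Phi/2}$-difference piece and a normalising-constant piece, use the $1$-Lipschitz property of $e^{-s/2}$ on $[0,\infty)$ together with the polarisation $|a|^2-|b|^2=(a-b)\cdot(a+b)$ and Cauchy--Schwarz in $t$, and close with the integrability of $w(x_{1:\TT})$. If anything, your treatment of step~(iii) via Markov's inequality is more explicit than the paper's, which simply asserts that $Z$ is bounded below by an $r$-dependent constant without spelling out why.
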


% % % % % % % % % % % % % % % % % % % % % % %
\subsubsection*{Remark}\label{Remark}
In this paper, we use HDA framework to simulate the hyperparameters of ordinary differential equations with assumption~\ref{Eq:assumption1}, and here we will show the assumption can be achieved by a specific observation function of the system. Assumption~\ref{Eq:assumption1} essentially represents not each In this paper, we use HDA framework to simulate the hyperparameters of ordinary differential equations with assumption~\ref{Eq:assumption1}, and here we will show the assumption can be achieved by a specific observation function of the system. Assumption~\ref{Eq:assumption1} essentially represents not each parameters but the hyperparameters are vital to the observation series $y_t$. And a necessary condition is that as the number of parameters $m$ goes to infinite, each parameter $\theta$ in ODE has little effect to observation, such as $\lim_{m\rightarrow\infty} \frac{\partial y_t}{\partial \theta} = 0$. It is obvious that choosing the mean value function of all states as an observation function satisfies the condition and here we give a simple model.

Let us consider one of the simplest case of discrete time. $x_{i}(t)$ be an i.i.d. Gaussian process: $x_{i}(t)=\mu_{i}+\varepsilon_{i,t}$, $i=\onetom$, and $t=1,\cdots,T$, where $\mu_{i}\in\R$, $\sigma_{i}>0$ and $\varepsilon_{i,t}$ is an standard white Gaussian process which are independent with respect to $i$ and $t$. Suppose that all $\mu_{i}$ are independently identically Gaussian distributed with $N(\mu_0,1)$. The observation $y(t)=\frac{1}{m}\sum_{i=1}^{m}x_{i}(t)$ is the average of all $x_{i}$. After a simple calculation, we have $y(t)=\frac{1}{m}\sum_{i=1}^{m}(\mu_{i}+\varepsilon_{i,t})\sim N(\mu_0, \frac{2}{m})$, $\lim_{m\rightarrow\infty} \frac{\partial y(t)}{\partial \mu_i} = 0$, $\E y(t)=\mu_0$. And if $\mu$ and $\mu'$ are both i.i.d. with $N(\mu_0,1)$, $\lim_{m\rightarrow\infty}|P(y'|\mu',\mu_0)-P(y|\mu,\mu_0)|=0$. Thus the observation satisfies the condition\ref{Eq:assumption1}. From this simple model, we can also find that if the task is to estimate hyperparameter $\mu_0$, larger $m$ makes $y$ and $\mu_0$ more closer. In other words, the estimation statistics of the hyperparameter converges to the real value (with respect to the invariant measure) as $m$ (number of equations/states) goes to infinity. The hyperparameters are vital to the observation series $y_t$. And a necessary condition is that as the number of parameters $m$ goes to infinite, each parameter $\theta$ in ODE has little effect to observation, such as $\lim_{m\rightarrow\infty} \frac{\partial y_t}{\partial \theta} = 0$. It is obvious that choosing the mean value function of all states as an observation function satisfies the condition and here we give a simple model.

Let us consider one of the simplest case of discrete time. $x_{i}(t)$ be an i.i.d. Gaussian process: $x_{i}(t)=\mu_{i}+\varepsilon_{i,t}$, $i=\onetom$, and $t=1,\cdots,T$, where $\mu_{i}\in\R$, $\sigma_{i}>0$ and $\varepsilon_{i,t}$ is an standard white Gaussian process which are independent with respect to $i$ and $t$. Suppose that all $\mu_{i}$ are independently identically Gaussian distributed with $N(\mu_0,1)$. The observation $y(t)=\frac{1}{m}\sum_{i=1}^{m}x_{i}(t)$ is the average of all $x_{i}$. After a simple calculation, we have $y(t)=\frac{1}{m}\sum_{i=1}^{m}(\mu_{i}+\varepsilon_{i,t})\sim N(\mu_0, \frac{2}{m})$, $\lim_{m\rightarrow\infty} \frac{\partial y(t)}{\partial \mu_i} = 0$, $\E y(t)=\mu_0$. And if $\mu$ and $\mu'$ are both i.i.d. with $N(\mu_0,1)$, $\lim_{m\rightarrow\infty}|P(y'|\mu',\mu_0)-P(y|\mu,\mu_0)|=0$. Thus the observation satisfies the condition\ref{Eq:assumption1}. From this simple model, we can also find that if the task is to estimate hyperparameter $\mu_0$, larger $m$ makes $y$ and $\mu_0$ more closer. In other words, the estimation statistics of the hyperparameter converges to the real value (with respect to the invariant measure) as $m$ (number of equations/states) goes to infinity.

% % % % % % % % % % % % % % % % % % % % % % %
\subsection*{Simulation of LIF neuronal network}

We use our HDA Algrothms to simulate LIF neuron network mentioned in \nameref{Methods} of $N=1000$ with degree $D=20$ up to $T=40000~msec$ by the Euler method of time step $1~msec$. Thus, we have a BOLD-signal time series of $50$ time points. The brain region activity is recorded as the sum of neuron activity for all $i\in N$:
\begin{align*}
z(t)=\frac{1}{N}\sum_{i\in N, k\in \NN}\delta(t-t_i^k)+\eta(t).
\end{align*}
where $\eta(t)$ is a white Gaussian noise with $cov(\eta(t)\eta(s))=\sigma_{n}\delta(t-s)$ with $\sigma_{n}=10^{-8}$. After getting brain region activity, we use Balloon-Windkessel model mentioned above to generate the BOLD signal. Then we focus on how to estimate the hyperparameters $h$ of synaptic conductance $g_{ui}$ from the observation series $\{y(t_l)\}_{l=1}^T$. To estimate these hyperparameters, parameters of synaptic conductance is sampled from the exponential distribution with the given hyperparameter independently and the procedure(see in \nameref{S2_Appendix}) is similar to Algorithm~\ref{algo:enkf}. When we choose one of the hyperparameters to assimilate, such as $g_{AMPA, i}$ $i\in\mathcal N$, all other parameters including $g_{u, i}(u\neq AMPA)$ are assumed to be known. All other neuron network model parameters are recapitulated in Table \ref{Table:1} and the details of algorithm configuration are in \nameref{Discussion}.

Fig~\ref{Fig:result}(a, b) illustrates BOLD error and hyperparameter error when we choose different combinations of hyperparameters to assimilate. The main criterion is how close the BOLD signal generated by toy-model with specific hyperparameters $h$ is to the BOLD signal simulation with hyperparameters simulation under specify configuration of algorithm parameters, as follow:
\begin{align*}
\varepsilon_{BOLD}&=\frac{1}{T}\sum_{t=1}^{T}\frac{|BOLD_{simulation}(t)-BOLD_{real}(t)|^2}{|BOLD_{real}(t)|^2}.\\
\varepsilon_{h}&=\frac{1}{T}\sum_{t=1}^{T}\frac{|h_{simulation}(t)-h_{real}(t)|^2}{|h_{real}(t)|^2}.
\end{align*}
As shown, the best performance in single hyperparameter assimilation is $u=AMPA$, and in the conbination of excitatory and inhibitory synaptic conductance hyperparameters assimilation is $u=NMDA, GABA_b$.

% Place figure captions after the first paragraph in which they are cited.
\begin{figure}[ht]
\includegraphics[width=.8\textwidth]{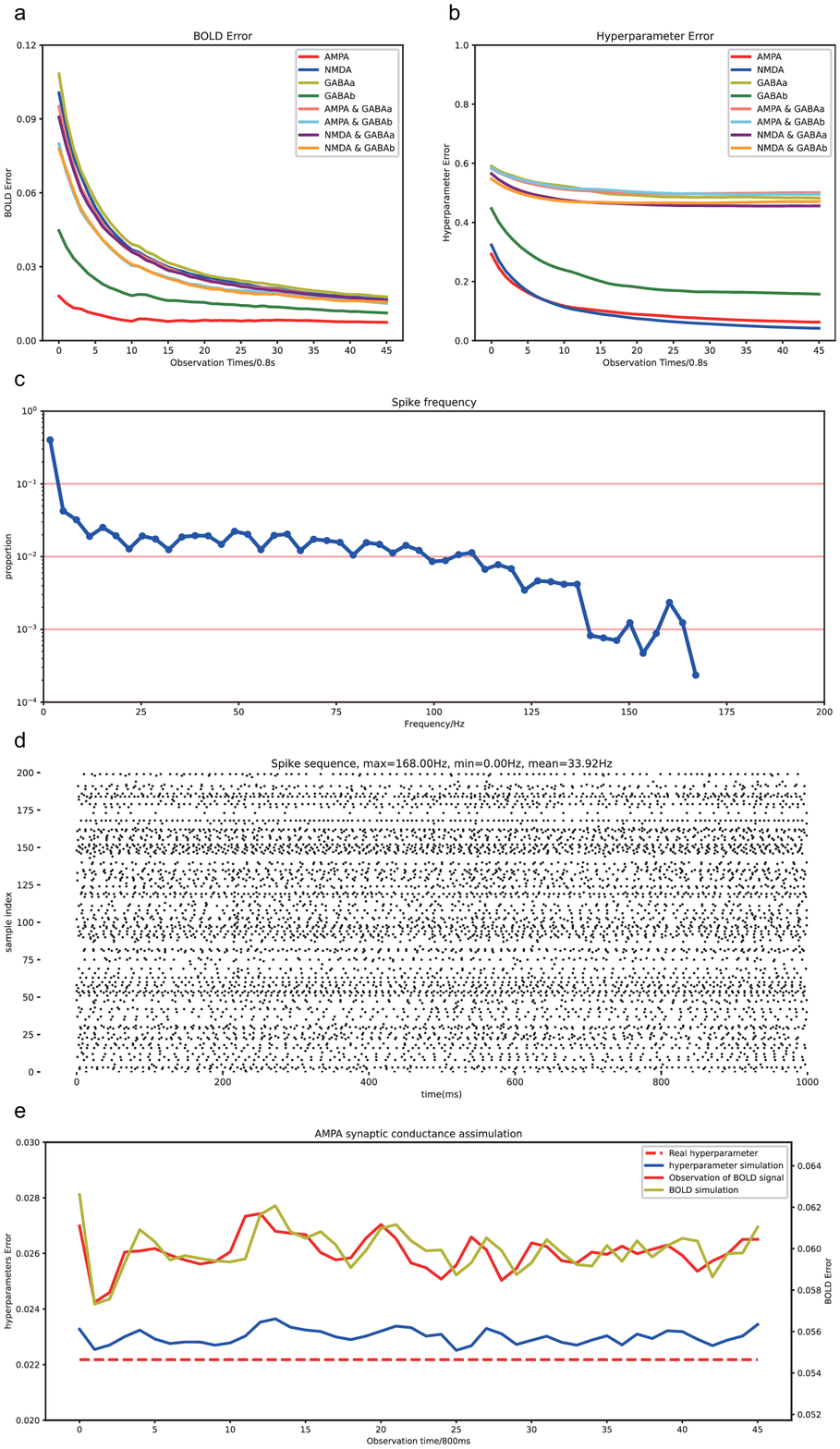}\\
\caption{\bf BOLD error and hypermeter error under different combinations of hyperparameter.}
{\bf a, b}: The difference of BOLD singles and hyperparameters between simulation and real are shown respectively on single synaptic conductance hyperparameter assimilation or the conbination of excitatory and inhibitory synaptic conductance hyperparameters assimilation. The error funcition is defined in above. {\bf c}: Sliding window is used to calculate the fire rate of sampling neurons and 40 percent of neurons have a firing rate of less than 5 Hz. {\bf d}: Although the observation is at the level of second, the spike of sample neurons can be recorded at each millisecond based on our model. {\bf e}: Bold signal and hyperparmeter serie of real data are both close to simulation under AMPA synaptic conductance assimulation.
\label{Fig:result}
\end{figure}

% % % % % % % % % % % % % % % % % % % % % % %
\subsection*{Performance of HDA}

Here we show the basic behavior of the network during a particular assimilation trial, $AMPA$ synaptic conductance assimilation. We record the spikes of all neurons at each millisecond, use a sliding window to average the spike every 800 msec, and calculate the spike frequency distribution shown in Fig~\ref{Fig:result}(c). It is easy to see that spike sequence and frequency distribution are similar to the record from the human brain, which shows the availability of our neuron network and method from the side. Besides, we choose 200 neurons to record their activities over 1000 milliseconds and we can see there is no synchronization or excessive activation in sample neurons in Fig~\ref{Fig:result}(d). Besides, in Fig~\ref{Fig:result}(a, b), it is shown that $AMPA$ synaptic conductance assimilation has the best performance which means the simulation signal is most close to the real Bold signal. We show Bold signal and hyperparameter series of HDA simulation and real data under $AMPA$ synaptic conductance assimilation in Fig~\ref{Fig:result}(e).

% % % % % % % % % % % % % % % % % % % % % % %
%\subsection*{Robustness}
%In HDA framework, we have an assumption that the observation of the network is independent of the specify parameters, not depends on distribution of parameters. So, in this subsection, we will show the assumption is realizable. We choose $AMPA$ synaptic conductance hyperparameter to assimilate and record the estimation hyperparameter sequence. Then we resample the parameters many times according to the hyperparameter sequences with the same neuron network topology and re-run the simulation. As shown in Fig~\ref{Fig:result}(f), the BOLD error under different sampling are all close to the BOLD error in HDA, which means the hyperparameters given by our methods has good robustness in our neuron network model.

\subsection*{Using MCMC to sample parameters from the hpyerparameter}

The Algorithm~\ref{algo:mcmc} (in \nameref{Methods}) makes it be realized to assimilate arbitrary parameters distribution with known probability density function. Without losing generality, we choose Gamma distribution instead of exponential distribution as a sample whose cumulative distribution function is hard to calculate. It is assumed that the parameters of AMPA synaptic conductance is i.i.d Gamma distribution with $\alpha=2$ and the original distribution in HDA is Gamma distribution with $\alpha=1$. As shown in Fig~\ref{Fig:MCMC}b, with more iteration times, HDA method still assimilates the real hyperparameter and thus is still valid.
% % % % % % % % % % % % % % % % % % % % % % %
% % % % % % % % % % % % % % % % % % % % % % %
\section*{Discussion}\label{Discussion}
% % % % % % % % % % % % % % % % % % % % % % %
\subsection*{Compare the setup of the algorithms}

In this section, we explore that how different learning parameters setup (in Table~\ref{Table:discussion}) in our method and different structures of network in out model influence the simulation result by experimental data. The main criterion is how close the BOLD signal generated by toy-model is to the BOLD signal simulation under specify configuration of different parameters and how close the estimation of hyperparameter is to the real hyperparameter.

\begin{table}[ht]
\begin{adjustwidth}{-2in}{0in} % Comment out/remove adjustwidth environment if table fits in text column.
\centering
\caption{
{\bf Compare the different configuration of algorithm.}}
\begin{tabular}{|c|c|c|c|c|c|}
\hline

\multicolumn{2}{|c|}{Different configuration} & {Standard deviation of } & {Standard deviation of } & {Number of} & {Parameter of}\\
\multicolumn{2}{|c|}{of HDA Algorithm \ref{algo:enkf}} & {hyperparameter noise} & {observation noise} & {the samples} & {WS network}\\
\thickhline
~ & configuration & 0.01 & $10^{-4}$ & 5 & 0 \\ \cline{2-6} Config1 & hp error $(\times10^{-3})$ & $4.66\pm2.85$ & $683\pm265$ & $112\pm195$ & $55.2\pm65.3$ \\ \cline{2-6} ~ & bold error $(\times10^{-3})$ & $6.25\pm2.87$ & $184\pm102$ & $29.8\pm31.5$ & $3.18\pm0.02$ \\ \thickhline
~ & configuration & 0.1 & $5\times10^{-4}$ & 10 & 0.1 \\ \cline{2-6} Config2 & hp error $(\times10^{-3})$ & $4.89\pm3.27$  & $5.94\pm1.18$ & $26.7\pm58.4$ & $63.7\pm71.5$ \\ \cline{2-6} ~ & bold error $(\times10^{-3})$ & $5.70\pm2.58$  &$5.46\pm0.69$ &$9.92\pm11.5$ &$3.23\pm0.02$ \\ \thickhline
~ & configuration & 0.2 & $\mathbf{10^{-3}}$ & 50 & 0.2 \\ \cline{2-6} Config3 & hp error $(\times10^{-3})$ & $4.90\pm1.97$ & $\mathbf{5.29\pm0.79}$ & $5.71\pm1.95$ & $67.4\pm65.2$ \\ \cline{2-6} ~ & bold error $(\times10^{-3})$ & $4.87\pm1.07$ & $\mathbf{4.39\pm0.21}$ & $4.59\pm0.65$ & $3.21\pm0.02$ \\ \thickhline
~ & configuration & $\mathbf{0.5}$ & $5\times10^{-3}$ & $\mathbf{100}$ & 0.3 \\ \cline{2-6} Config4 & hp error $(\times10^{-3})$ & $\mathbf{5.29\pm0.79}$ & $4.18\pm0.61$ &$\mathbf{ 5.29\pm0.79}$ & $67.9\pm55.5$ \\ \cline{2-6} ~ & bold error $(\times10^{-3})$ & $\mathbf{4.39\pm0.21}$ & $4.10\pm0.26$ & $\mathbf{4.39\pm0.21}$ & $3.20\pm0.03$ \\ \thickhline
~ & configuration & 1 & $10^{-2}$ & 200 & 0.5 \\ \cline{2-6} Config5 & hp error $(\times10^{-3})$ & $5.85\pm0.85$ & $2.97\pm0.64$ & $5.19\pm0.87$ & $74.5\pm58.5$ \\ \cline{2-6} ~ & bold error $(\times10^{-3})$ & $4.78\pm0.35$ & $5.04\pm0.61$ & $4.32\pm0.11$ & $3.18\pm0.02$ \\ \thickhline
~ & configuration & 2 & $5\times10^{-2}$ & 300 & 0.8 \\ \cline{2-6} Config6 & hp error $(\times10^{-3})$ & $8.24\pm1.51$ & $7.72\pm1.87$ & $5.47\pm0.45$ & $87.3\pm98.7$ \\ \cline{2-6} ~ & bold error $(\times10^{-3})$ & $6.65\pm0.72$ & $23.4\pm4.18$ & $4.25\pm0.07$ & $3.03\pm0.01$ \\ \thickhline
~ & configuration & 5 & $10^{-1}$ & 500 & 1 \\ \cline{2-6} Config7 & hp error $(\times10^{-3})$ & $109\pm142$ & $13.2\pm2.34$ & $5.36\pm0.37$ & $69.7\pm102$ \\ \cline{2-6} ~ & bold error $(\times10^{-3})$ & $70.6\pm87.8$ & $54.9\pm8.75$ & $4.28\pm0.09$ & $3.12\pm0.02$ \\ \thickhline
\end{tabular}
\begin{flushleft}
Table shows different configurations of algorithm parameters discussed in this section to exhibit the error simply and intuitively. The highlight of the table is the final configurations used in \nameref{Results}.
\end{flushleft}
\label{Table:discussion}
\end{adjustwidth}
\end{table}

\subsubsection*{Standard deviation of noise}
When we use Enkf to estimate the state of the real system or ground truth model, we regard the hyperparameters as system states as mentioned before, and therefore it is necessary to add Gaussian noise in the model samples and observation dataset. It is obvious that bigger covariance of noise causes more randomness in the learning process, thus we should choose proper covariance to make sure the learning efficiency. Table~\ref{Table:discussion} shows the hyperparameter error and bold error under different covariance of model noise $\sigma_h$ and different covariance of observation noise $\sigma_o$. In the previous section, we set $\sigma_h=1, \sigma_o=10^{-6}$ in Fig~\ref{Fig:result}(e) and this configuration is best according to the Table~\ref{Table:2}, considering hyperparameters and BOLD error both.

\subsubsection*{Number of the samples}
In the traditional Enkf method, it is necessary to use ensemble samples to calculate the covariance of states. Thus in our algorithm, ensemble samples are also used and it is obvious that more samples cause more accuracy during simulating bold signals and estimating the hyperparameters. Table~\ref{Table:discussion} shows the hyperparameters and BOLD error under the different sizes of samples. In our trial, we use a NVIDIA Telsa V100 GPU to simulate 20 samples of 1000 neurons with network degree 20, and 5 GPU to realize 100 samples as reasonable ensemble number defaults.\\

\subsection*{Effect of network topologies}
In the previous part of this paper, we assumed the neuron network is a random network with a specific degree. For one thing, the random network can improve randomness to make assumption~\ref{Eq:assumption1} easily realized, for another, the random network can reduce the difference between different samples in EnKF. Here we try other network structures such as WS-small-world network~\cite{1998Collective} and BA-scale-free network~\cite{1999Albert}. In WS-small-world network, the parameter of the network means the probability of rewiring each edge, thus larger parameter means the network has a more random connection. In BA-scale-free network, the number of edges to attach from a new node to existing nodes is set to 10. As shown in Table~\ref{Table:discussion} and Fig~\ref{Fig:BA and pf}(a), BOLD can be simulated with small error but hyperparameters simulation is worse than random network.

\begin{figure}[ht]
\includegraphics[width=.8\textwidth]{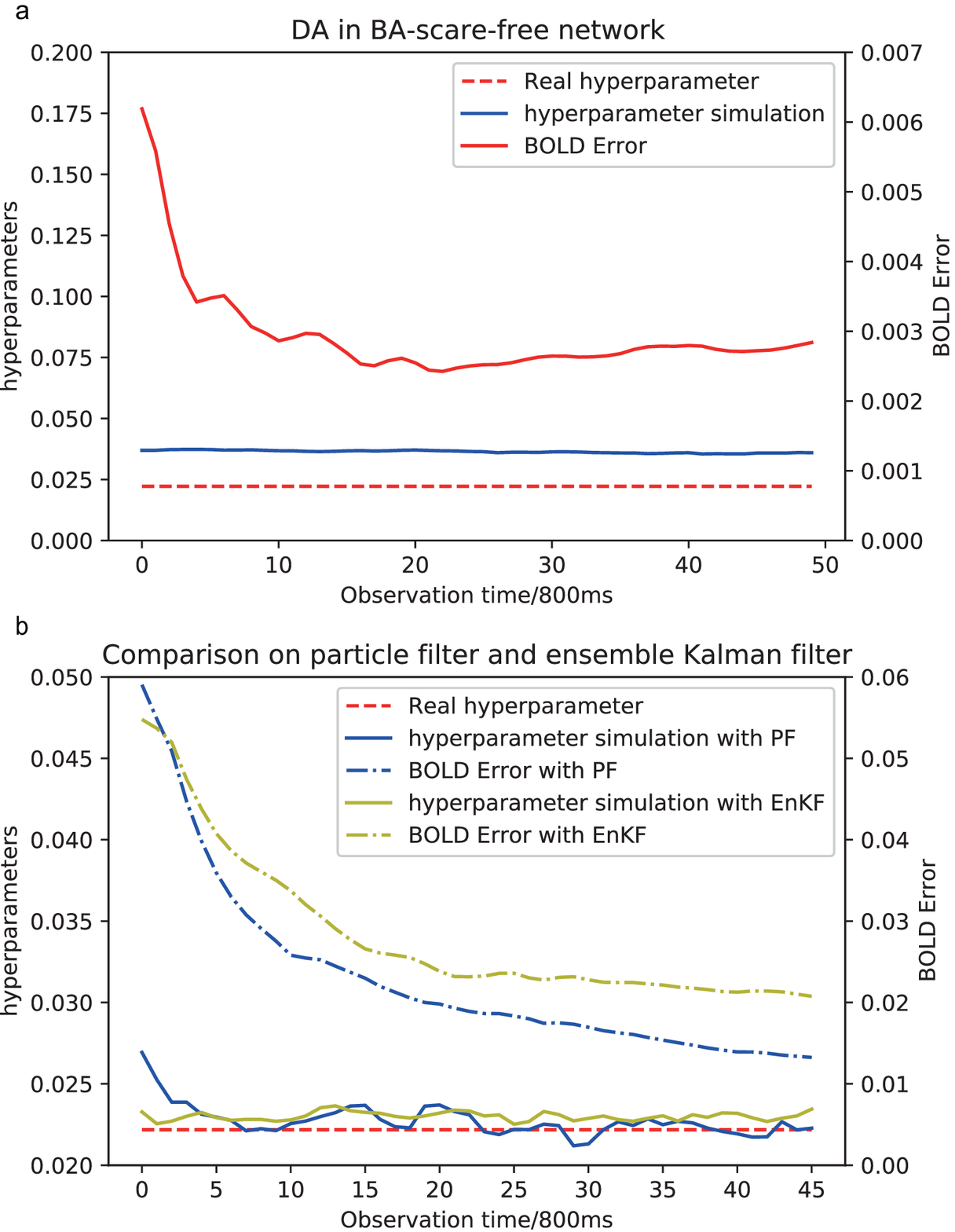}\\
\caption{\bf BOLD error and hypermeter error under different combinations of hyperparameter.}
{\bf a}: When generating BA-scale-free network, the new nodes are regard as postsynaptic neurons so that most neurons have same number of parent nodes. In this case, the observation is greatly affected by hub nodes thus hyperparameter is hard to estimate accurately. {\bf b}: We replace ensemble Kalman filter with particle filter to assimalate the AMPA synaptic conductance. The other configurations of Algorithm remain same and the numbers of samples in two trials are both 100.
\label{Fig:BA and pf}
\end{figure}

\subsection*{HDA by particle filter}
As we showed in \nameref{Methods}, except Kalman filter, HDA framework can also combine with other filters such as particle filter. Different from Kalman filter trying to minimize mean-squared posterior error, particle filter seeks to maximize a posterior probability and the specified procedure is introduced in detail in \nameref{S2_Appendix}. Because of the complexity of the neuron model and the scale of network, particle filter usually needs more samples to calculate posterior probability and thus needs more storage space and computation. Here we apply particle filter with 100 samples to make a comparison to EnKF filter. As shown in Fig~\ref{Fig:BA and pf}(b), particle filter also has a good effect in both hyperparameters estimation and BOLD signal simulation. But when the dimension of the parameter is high, the number of the samples increases exponentially to ensure the simulation accuracy.

\subsection*{Transformation function of hyperparameters}
As shown in \nameref{S1_Appendix}, HDA framework adds hyperparameters into ODE states by following the equation, $\dot{h}=0$. However, if we use $h'=f(h)$ as new hyperparameters, $h'$ should also satisfy $\dot{h'}=\frac{\partial f}{\partial h}\dot{h}=0$. Thus, theoretically, we can choose proper hyperparameters representation by adding constraints to get better simulation results. In this paper, we replace $h$ with $h'$ to have a random walk, where
\begin{eqnarray}
h = h_{min}+\frac{(h_{max}-h_{min})\exp(\lambda h')}{1+\exp(\lambda h')},
\label{Transformation f of h}
\end{eqnarray}
$\lambda=0.1$, $h_{min}$ and $h_{max}$ are one quarter and double of ground truth of synaptic conductance respectively. In this situation, we make sure $h$ belong to $(h_{min}, h_{max})$, when random walk of $h'$ is on $\R$.
% % % % % % % % % % % % % % % % % % % % % % %
% % % % % % % % % % % % % % % % % % % % % % %
\section*{Conclusion}\label{Conclusion}
When focusing on simulation or estimation of states in large-scale network systems, the number of model parameters is frequently much larger than the number of the observed samples. Especially when investigating the human brain, it is much more difficult to measure each single neuron activity but able to collect the BOLD signals of voxels or brain regions. In this paper, we give an example (in \nameref{Discussion}) that the observation data does not depend on the specific parameters but depend on the distribution of the parameters. Thus we claim that, in many cases, the distribution of the parameters in the model rather than the exact values of these parameters are physiologically interesting in a very large-scale complex network dynamical system. By describing the distribution of random variables by introducing a hyperparameterized distribution, we proposed an HDA framework by estimating hyperparameter based on hierarchical Bayesian inference. According to the trails in previous sections, the HDA framework can estimate the hyperparameters with the true values well with small errors, and the error of the sampled BOLD signals with the true simulated data is small, too. This result is shown to be stable and robust for different parameter resamples from the estimated hyper-parameter.

In detail, we design a tricky evolution of parameters to guarantee that the parameters are independent and identically distributed throughout. When the cumulative distribution function (cdf) of the parameters is easy to be numerically calculated, as we have shown in \nameref{Methods}, the evolution is achieved by the cdf and its inverse function. However, when the cdf is complicated to be numerical calculated, Algorithm~\ref{algo:mcmc} is an effective method based on the MCMC method. We arise two HDA algorithms according to the particle filter and the ensemble Kalman filter (EnKF) approaches in data assimilation. These methods are both proved efficient for a large-scale neuron network model with the simulated BOLD signals. Furthermore, we compare the BOLD signals simulation brought out by different configurations of our algorithm and different parameters in our neuron model. Estimating synaptic conductance of $AMPA$ has the best performance in single hyperparameter assimilation, and estimating both $NMDA$ and $GABA_b$ are the best in the combination of excitatory and inhibitory synaptic conductance assimilation. For network topology, BOLD signals can be simulated well in all of WS-small-world network, BA-scale-free network and random network, while hyperparameters estimation is more accurate in a random network. Finally, we prove that the HDA algorithm can asymptotically estimate the hyperparameter feasibly under certain assumptions. 

We conclude that, the proposed HDA framework present an efficient method to statistically infer the large-scale full-brain neuronal network with inhomogeneous parameters, since the number of hyperparameter is relatively small that can avoid over-fitting results. Last but not least, the HDA is an extensible framework to equip with diverse biophysical neuronal network model other than LIF network, and diverse modules of brain data other than MRI. This could be one of our future research orients.

% % % % % % % % % % % % % % % % % % % % % % %
% % % % % % % % % % % % % % % % % % % % % % %
\section*{Supporting information}

% Include only the SI item label in the paragraph heading. Use the \nameref{label} command to cite SI items in the text.

% % % % % % % % % % % % % % % % % % % % % % %
\paragraph*{S1 Appendix.}\label{S1_Appendix}
{\bf Overview of Data Assimilation.}\\
Let $F(\cdot)\in C^1(\mathbb{R}^n,\mathbb{R}^n)$, $H(\cdot)\in C^1(\mathbb{R}^n, \mathbb{R}^q)$ and consider the following general ordinary differential equation:
\begin{eqnarray}
\frac{d}{dt}x(t)=&F(x(t)),\\
y(t)=&H(x(t)),
\label{ode1}
\end{eqnarray}
where $x\in \mathbb{R}^n$ stands for the state vectors of the dynamic system and the observation of $x$ is $y\in \mathbb{R}^q$. Here $H(\cdot)$ is called a observation map.

When $F$ is so complicated that we fail to get accurate formula solution, it is effective to numerical simulation by transforming the ODE into the iterated map, or discrete-time dynamical system. Due to numerical errors and modeling errors, we are also describe the physical system by the following discrete-time stochastic dynamical model of the form:
\begin{align}
x(t)&=F(x(t-1))+\xi_t,\\
y(t)&=H(x(t))+ \eta_{t}, \quad t\in \TT=\{1, \cdots, T\}.
\label{ode2}
\end{align}
Here $\xi=\{\xi_t\}_{t\in\mathbb{T}}$ is an i.i.d.sequence, independent of $x$ and $\eta=\{\eta_{t}\}_{t\in\mathbb{T}}$ is an i.i.d.sequence, independent of $(x, y, \xi)$.

If observational data $Y_T=\{y(t)\}_{t\in\mathbb{T}}$ is acquired, DA is a kind of iterative process by correcting the states of the dynamical model based on the observation and model information and can be derived using Bayesian statistics and recursive Bayesian estimation.
There are generally two classes of DA methods. The first one is filtering methods which update the posterior filtering distribution $\mathbb{P}(x_t|Y_{T})$ at each time and the second one is smoothing methods which aim at approximating the posterior smoothing distribution $\mathbb{P}(X_{T}|Y_{T})$. Generally speaking, DA methods is optimization to seek the posterior distribution.DA methods is optimization to seek the posterior distribution and the criteria for measuring the optimality can be referred in [book Guzzi]:
\begin{enumerate}
\item The Minimum Mean-Squared Error (MMSE) that is defined in terms of prediction error
\begin{align*}
\mathbb{E}[||x_t-\hat{x}_t||^2|y_{\TT}]=\int||x_t-\hat{x}_t||^2p(x_t|y_{\TT})dx_k,
\end{align*}
where the conditional mean $\hat{x}_t = \mathbb{E}[x_t|y_{\TT}]=\int x_t\ p(x_t|y_{\TT})dx_t$.

\item Maximum A Posteriori (MAP) that is aimed to find the mode of posterior probability $p(x_t|y_{\TT})$ which is equal to minimize a cost function.

\item The Maximum Likelihood (ML) neglect the prior, that is to find maximum likelihood estimator by maximize likelihood function $L=\mathbb{P}(y_{\TT}|x_{\TT})$.

\item The Minimax that is to minimize the maximum of cost function of the posterior probability.
\end{enumerate}

%To unify, we write the evolution (the Markov law) of the states (including the random walking of parameters) by the transition probability density
%\begin{align*}
%p[X(t+1)|X(t);\alpha(t)]=\delta(X(t+1)-\mathcal F(X(t),\alpha(t))).
%\end{align*}
%where $\mathcal F(\cdot)$ can be derived from the difference equations above, $\delta(\cdot)$ is the Dirac-delta function, and $\alpha(t)$  is the collection of all random processes in the equation, including $\epsilon$, $\eta$ and $\upsilon$.
In discrete-time stochastic dynamical model, if we assume that $F(\cdot)$ and $H(\cdot)$ are determined by the parameter vector $\theta$ and $\theta$ is regarded as a state variable with dynamic equation $\dot{\theta}=0$, DA has proved to be an efficient framework in parameter estimation.

% % % % % % % % % % % % % % % % % % % % % % %
\paragraph*{S2 Appendix.}\label{S2_Appendix}
{\bf EnKF and Particle filter.}\\
Ensemble Kalman filter as a kind of Kalman filter derivatives tries to minimize mean-squared error. EnKF is a approximate Guassian filter proposed by G. Evensen, that has a natural generalization to non-Gaussian problems. Compared with Kalman Filter approach, EnKF draws some samples of the dynamic system and uses the covariance of samples to substitute for the covariance of posterior distribution. The complete EnKF procedure is as follows.

\begin{enumerate}
\item Generate $N$ independent realisations of all random terms in the model, including the Gaussian noised/error in measurements and random walks, denoted by $\alpha^{n}(t)$, $n=1,\cdots,N$;

\item Generate $N$ realisations of the predictive density $p(X(t)|Y_{t-1})$: $\hat{X}^{n}(t)$, $j=1,\cdots,N$,  by the map $\hat{X}^{n}(t)=F(\tilde X^{n}(t-1),\alpha^{n}(t))$;

\item Calculate the mean $\mu(t) = \frac{1}{N}\sum^N_{n=1}\hat X^n(t)$

\item Calculate the covariance matrix $C(t) = \frac{1}{N-1}\sum^N_{n=1}(\hat X^n(t)-\mu(t))(\hat X^n(t)-\mu(t))^T$;

\item Calculate the Kalman gain matrix $K(t) = C(t)H^T(HC(t)H^T+\Gamma)^{-1}$;

\item Generate $N$ realizations of the filter density $p(X(t)|Y_{t})$: $\tilde X^{n}(t)=\hat X^{n}(t)+K(t)(Y_{t}-HX^n(t))$, $n=1,\cdots,N$.

\end{enumerate}

From the view of minimization principle~\cite{2015Data}, we note the posterior filter distribution proportional to $\exp(-I_{filter})$. While the Kalman filter is restricted to linear Gaussian problem, the mean in Kalman filter can be written as:
\begin{align*}
m_{j+1} &= \arg\min_v I_{KF}(v)\\
I_{KF}(v) &= \frac{1}{2}|y_{j+1}-Hv|_{\Gamma}^2+\frac{1}{2}|v-Fm_j|_{\hat{C}_{j+1}}^2,\\
\end{align*}
where $F\in R^{n\times n}, H\in R^{m\times n}$ are linear operator. When we consider the nonlinear case, the mean in EnKF is given by
\begin{align}
\label{I_filter}
m_{j+1} &= \arg\min_v I_{EnKF}(v)\\
\hat{\mu}_{j+1} &= F(m_j) \notag\\
I_{EnKF}(v) &= \frac{1}{2}|y_{j+1}-H(v)|_{\Gamma}^2+\frac{1}{2}|v-\hat{\mu}_{j+1}|_{\hat{C}_{j+1}}^2\notag.
\end{align}

Particle filter approach seeks maximum a posteriori. By Bayesian and total probability formulas, we have
\begin{align}
p(X(t)|Y_{t})&=p[X(t)|(y(t),Y_{t-1})]=
\frac{p(y(t)|X(t)|Y_{t-1})p(X(t)|Y_{t-1})}{p(y(t)|Y_{t-1})}\nonumber\\
&=\frac{p(y(t)|X(t))}{p(y(t)|Y_{t-1})}
p(X(t)|Y_{t-1})\label{bayes1}
\end{align}
noting that $p(y(t)|X(t)|Y_{t-1})=p(y(t)|X(t))$ because $y(t)$ is an observation of $X(t)$, with
\begin{align}
p(y(t)|Y_{t-1})=\int p(y(t)|X(t))p(X(t)|Y_{t-1})dX(t)\label{average}
\end{align}
Monto-Carlo filter is deployed here to construct the posterior density. In this approach, each density is approximated by the sampling data. For example, for a probability density $p(\cdot)$, let $z_{1},\cdots,z_{N}$ be $N$ independent samples from this probability distribution. Then, we have
\begin{align*}
p(z)\approx\frac{1}{N}\sum_{i=1}^{N}g(z-z_{i})
\end{align*}
where $g(\cdot)$ is a high peak function centered at $0$, for example, the Gaussian-type function: $g(z)\propto\exp(-z^{2}/(2\alpha_{g}^{2}))$ with a sufficiently small $\alpha_{g}$.

Note
\begin{align}
p(X(t)|Y_{t-1})=\int p(X(t)|X(t-1))p(X(t-1)|Y_{t-1})dX(t-1).\label{pred}
\end{align}
Beside the estimation problem $p(X(t)|Y_{t})$, which is also named {\em filter}, the prediction problem is also concerned, i.e.,  $p(x_{t}|Y_{t-1})$, and related, as indicated in (\ref{pred}).

Giving the size of sampling as $N$, which is very large,  the following bootstrap algorithm is used. Initiated with the realizations from the prior distribution,  the iteration from $p(X(t-1)|Y_{t-1})$  to $p(X(t)|Y_{t})$ is conducted as follows:
\begin{enumerate}
\item Generate $N$ independent realisations of all random terms in the model, including the Gaussian noised/error in measurements and random walks, and the Poisson point processes at synapses, denoted by $\hat{\alpha}^{j}(t)$, $j=1,\cdots,N$;

\item Generate $N$ realisations of the predictive density $p(X(t)|Y_{t-1})$: $\hat{X}^{j}(t)$, $j=1,\cdots,N$,  by the map $\hat{X}^{j}(t)=F(\tilde{X}^{j}(t-1),\hat{\alpha}^{j}(t))$;

\item Calculate the density weights $\omega^{j}(t)=p(y(t)|X(t)=\hat{X}^{j}(t))$;

\item Generate $N$ realizations of the filter density $p(X(t)|Y_{t})$: $\tilde{X}^{j}(t)$, $j=1,\cdots,N$, by re-sampling (with replacement) of $\hat{X}^{j}(t)$ with sampling probabilities propositional to $\omega^{j}(t)$.

\end{enumerate}

Thus, the filter density can be constructed by the sampling $\tilde{X}^{j}(t)$ and the prediction density by $\hat{X}^{j}(t)$.Thus, the estimative expectation of any function $\E(f(X(t))|Y_{t})$ can be calculated as follows:
\begin{align*}
\E(f(X(t))|Y_{t})\approx\frac{1}{N}\sum_{j=1}^{n}f(\tilde{X}^{j}(t)).
\end{align*}
and
the predictive expectation of any function $\E(f(X(t))|Y_{t-1})$ can be calculated as follows:
\begin{align*}
\E(f(X(t))|Y_{t-1})\approx\frac{1}{N}\sum_{j=1}^{n}f(\hat{X}^{j}(t)).
\end{align*}

% % % % % % % % % % % % % % % % % % % % % % %
\paragraph*{S3 Appendix.}\label{S3_Appendix}
{\bf Prove of the Theorem~\ref{Thm:1}.}\\
Before we proof the theorem, let proof the following lemma:

\begin{lemma}
Let $p_1, p_2$ be the probably distribution function and $f:\R^l\rightarrow\R$ be such that
\begin{align*}
\int |f(y)|^2 p_1(y)dy+\int |f(y)|^2 p_2(y)dy<\infty
\end{align*}
Then
\begin{align*}
|\int f(y)(p_1(y)-p_2(y))dy| \leqslant 2\big(\int |f(y)|^2 p_1(y)dy+\int |f(y)|^2 p_2(y)dy\big)^{\frac{1}{2}}\ d_{Hell}(p_1, p_2)
\end{align*}
\end{lemma}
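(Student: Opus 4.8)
The plan is the standard Hellinger-distance estimate: exploit the algebraic identity
\begin{align*}
p_1(y)-p_2(y) = \bigl(\sqrt{p_1(y)}-\sqrt{p_2(y)}\bigr)\bigl(\sqrt{p_1(y)}+\sqrt{p_2(y)}\bigr),
\end{align*}
so that
\begin{align*}
\int f(y)(p_1(y)-p_2(y))dy = \int f(y)\bigl(\sqrt{p_1(y)}+\sqrt{p_2(y)}\bigr)\cdot\bigl(\sqrt{p_1(y)}-\sqrt{p_2(y)}\bigr)dy.
\end{align*}
I would then apply the Cauchy--Schwarz inequality to the two factors $f(y)(\sqrt{p_1(y)}+\sqrt{p_2(y)})$ and $\sqrt{p_1(y)}-\sqrt{p_2(y)}$, which bounds the left-hand side by
\begin{align*}
\Bigl(\int |f(y)|^2\bigl(\sqrt{p_1(y)}+\sqrt{p_2(y)}\bigr)^2 dy\Bigr)^{1/2}\Bigl(\int\bigl(\sqrt{p_1(y)}-\sqrt{p_2(y)}\bigr)^2 dy\Bigr)^{1/2}.
\end{align*}

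Next I would control the first integral using the elementary inequality $(a+b)^2\leqslant 2(a^2+b^2)$ with $a=\sqrt{p_1(y)}$, $b=\sqrt{p_2(y)}$, giving $(\sqrt{p_1}+\sqrt{p_2})^2\leqslant 2(p_1+p_2)$ pointwise, hence
\begin{align*}
\int |f(y)|^2\bigl(\sqrt{p_1(y)}+\sqrt{p_2(y)}\bigr)^2 dy \leqslant 2\Bigl(\int |f(y)|^2 p_1(y)dy + \int |f(y)|^2 p_2(y)dy\Bigr),
\end{align*}
which is finite by the hypothesis on $f$ (this is also what makes the Cauchy--Schwarz step legitimate, since both factors are then square-integrable). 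For the second integral I would simply recognize, from the definition of the Hellinger distance, that $\int(\sqrt{p_1}-\sqrt{p_2})^2 dy = 2\,d_{Hell}(p_1,p_2)^2$.

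Finally I would combine the three displays: the right-hand side becomes
\begin{align*}
\Bigl(2\bigl(\textstyle\int |f|^2 p_1 + \int |f|^2 p_2\bigr)\Bigr)^{1/2}\bigl(2\,d_{Hell}(p_1,p_2)^2\bigr)^{1/2} = 2\Bigl(\int |f|^2 p_1 + \int |f|^2 p_2\Bigr)^{1/2} d_{Hell}(p_1,p_2),
\end{align*}
which is exactly the claimed bound. I do not anticipate a genuine obstacle here — the only point deserving a line of care is justifying the Cauchy--Schwarz application and the finiteness of all integrals from the stated $L^2$-type assumption on $f$; everything else is the factorization trick plus the pointwise inequality $(a+b)^2\le 2(a^2+b^2)$.
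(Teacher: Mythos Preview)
Your proposal is correct and follows essentially the same route as the paper: factor $p_1-p_2=(\sqrt{p_1}-\sqrt{p_2})(\sqrt{p_1}+\sqrt{p_2})$, apply Cauchy--Schwarz, then use $(\sqrt{p_1}+\sqrt{p_2})^2\leqslant 2(p_1+p_2)$ and the definition of $d_{Hell}$. The only cosmetic difference is that the paper first passes to $\int|f||p_1-p_2|\,dy$ and inserts the $\sqrt{2}\cdot\tfrac{1}{\sqrt{2}}$ factors before Cauchy--Schwarz, which amounts to the same computation.
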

\begin{proof}
\begin{align*}
|\int f(y)(p_1(y)-p_2(y))dy| &\leqslant \int |f(y)||p_1(y)-p_2(y)|dy\\
&=\int\sqrt{2}|f(y)|\sqrt{p_1}+\sqrt{p_2}|\cdot\frac{1}{\sqrt{2}}|\sqrt{p_1}-\sqrt{p_2}|dy\\
&\leqslant\big(2\int |f(y)|^2 |\sqrt{p_1}+\sqrt{p_2}|^2)dy\big)^{\frac{1}{2}}({\frac{1}{2}}\int|\sqrt{p_1}-\sqrt{p_2}|^2dy)^{\frac{1}{2}}\\
&\leqslant2\big(\int |f(y)|^2 p_1(y)dy+\int |f(y)|^2 p_2(y)dy\big)^{\frac{1}{2}}\ d_{Hell}(p_1, p_2)
\end{align*}
\end{proof}

Back to the prove of Theorem~\ref{Thm:1}.

\begin{proof}
Let
\begin{align*}
A_{n,m}&=\frac{\partial}{\partial h}L(h,Y_1,\cdots,Y_n,\Theta_m)|_{h = \tilde{h}}\\
&=-\frac{1}{n}\sum_{i=1}^n\frac{\partial}{\partial h}\log\Prob(Y_i|h,\Theta_m^i)|_{h = \tilde{h}}
\end{align*}
By(R1,R2),
\begin{align*}
\int\frac{\partial}{\partial h}\Prob(Y|h,\Theta_m)dY=\frac{\partial}{\partial h}\int\Prob(Y|h,\Theta_m)dY=\frac{\partial}{\partial h}1=0\\
\int\frac{\partial^2}{\partial h^2}\Prob(Y|h,\Theta_m)dY=\frac{\partial}{\partial h}\int\frac{\partial}{\partial h}\Prob(Y|h,\Theta_m)dY=0
\end{align*}
and it follows that
\begin{align*}
\int\frac{\partial}{\partial h}\log\Prob(Y|h,\Theta_m)\ \Prob(Y|h,\Theta_m)dY=\int\frac{\partial}{\partial h}\Prob(Y|h,\Theta_m)dY=0
\end{align*}
and
\begin{equation}
\begin{aligned}
&\int\frac{\partial^2}{\partial h^2}\log\Prob(Y|h,\Theta_m)\ \Prob(Y|h,\Theta_m)dY\\
&=\frac{\partial}{\partial h}\int\frac{\partial}{\partial h}\log\Prob(Y|h,\Theta_m)\ \Prob(Y|h,\Theta_m)dY-\int\frac{\partial}{\partial h}\log\Prob(Y|h,\Theta_m)\frac{\partial}{\partial h}\Prob(Y|h,\Theta_m)dY\\
&=-\int\biggl(\frac{\partial}{\partial h}\log\Prob(Y|h,\Theta_m)\biggr)^2\Prob(Y|h,\Theta_m)dY
\end{aligned}
\label{identical equation}
\end{equation}
By the lemma and R4,
\begin{equation}
\begin{aligned}
|&\int\frac{\partial}{\partial h}\log\Prob(Y|h,\Theta_m)\ \Prob(Y|h,\tilde{\Theta}_m)dY|\\
&=|\int\frac{\partial}{\partial h}\log\Prob(Y|h,\Theta_m)\ (\Prob(Y|h,\tilde{\Theta}_m)-\Prob(Y|h,\Theta_m))dY|\\
&\leqslant2\big(\int|\frac{\partial}{\partial h}\log\Prob(Y|h,\Theta_m)|^2(\Prob(Y|h,\tilde{\Theta}_m)+\Prob(Y|h,\Theta_m))dY\big)^{\frac{1}{2}}d_{Hell}(\Prob(Y|h,\tilde{\Theta}_m), \Prob(Y|h,\Theta_m))\\
&\leqslant4\sqrt{2K}\varepsilon(m),
\end{aligned}
\end{equation}
and $\lim_{m\rightarrow\infty}4\sqrt{2K}\varepsilon(m) = 0$.

Thus $A_{n,m}$ is the mean of $n$ independent random vectors and the expected value of each random vectors is $0$ as $m$ goes to infinity. Therefore, by the Weak Law of Large Numbers(WLLN), $\lim_{n, m\rightarrow\infty}A_{n,m}=0$ in probability.

Let
\begin{align*}
B_{n,m}&=\frac{\partial^2}{\partial h^2}L(h,Y_1,\cdots,Y_n,\Theta_m)|_{h = \tilde{h}}\\
&=-\frac{1}{n}\sum_{i=1}^n\frac{\partial^2}{\partial h^2}\log\Prob(Y_i|h,\Theta_m^i)|_{h = \tilde{h}}\\
C_{n,m}&=H(Y_1,\cdots,Y_n, \tilde{h})\\
&=\frac{1}{n}\sum_{i=1}^n H(Y, \tilde{h})
\end{align*}

Combined with equation \ref{Taylor expansion}, we have
\begin{equation}
\begin{aligned}
0&=
\frac{\partial}{\partial h}L(h,Y_1,\cdots,Y_n,\Theta_m)|_{h = \hat{h}}\\
&=\frac{\partial}{\partial h}L(h,Y_1,\cdots,Y_n,\Theta_m)|_{h = \tilde{h}}+(\hat{h}-\tilde{h})\frac{\partial^2}{\partial h^2}L(h,Y_1,\cdots,Y_n,\Theta_m)|_{h = \tilde{h}}\\
&+\frac{1}{2}\zeta(\hat{h}-\tilde{h})^2H(Y_1,\cdots,Y_n, \tilde{h})\\
&=A_{n,m}+(\hat{h}-\tilde{h})\biggl(B_{n,m}+\frac{1}{2}\zeta(\hat{h}-\tilde{h})C_{n,m}\biggr)
\end{aligned}
\end{equation}

Thus
\begin{equation}
\lim_{m, n\rightarrow\infty}(\hat{h}-\tilde{h})\biggl(B_{n,m}+\frac{1}{2}\zeta(\hat{h}-\tilde{h})C_{n,m}\biggr) = 0\quad\text{in probability}.
\end{equation}
Analyzing as same as the $A_{n,m}$, we have
\begin{equation}
\begin{aligned}
&\int\frac{\partial^2}{\partial h^2}\log\Prob(Y|h,\Theta_m)\ \Prob(Y|h,\tilde{\Theta}_m)dY-\int\biggl(\frac{\partial}{\partial h}\log\Prob(Y|h,\Theta_m)\biggr)^2\Prob(Y|h,\Theta_m)dY\\
&=\int\frac{\partial^2}{\partial h^2}\log\Prob(Y|h,\Theta_m)\ \Prob(Y|h,\tilde{\Theta}_m)dY+\int\frac{\partial^2}{\partial h^2}\log\Prob(Y|h,\Theta_m)\ \Prob(Y|h,\Theta_m)dY\\
&\leqslant4\sqrt{2K}\varepsilon(m).
\end{aligned}
\end{equation}
According to $\lim_{m\rightarrow\infty}\varepsilon(m) = 0$ and assumption R3, we have
\begin{eqnarray*}
&\lim_{n, m\rightarrow\infty}B_{n,m} = K_{\infty}\quad\text{in probability},\\
&\lim_{n, m\rightarrow\infty}C_{n,m} = \E_Y[H(Y)]<\infty\quad\text{in probability}.
\end{eqnarray*}

According to the Slutsky Theorem,
\begin{align}
|\hat{h}-\tilde{h}| = \lim_{n,m\rightarrow\infty} \frac{|A_{n,m}|}{|B_{n,m}+\frac{1}{2}C_{n,m}\zeta(\hat{h}-\tilde{h})|}=0\quad\text{in distribution}.
\end{align}
Furthermore, based on central limit theorem,
\begin{align}
\begin{aligned}
\sqrt{N}|\hat{h}-\tilde{h}| &= \frac{\sqrt{N}|A_{n,m}|}{|B_{n,m}+\frac{1}{2}C_{n,m}\zeta(\hat{h}-\tilde{h})|}\\
\end{aligned}
\end{align}
converges to Gaussian distribution with variance of $1/{K_{\infty}}$.

\end{proof}

% % % % % % % % % % % % % % % % % % % % % % %
\paragraph*{S4 Appendix.}\label{S4_Appendix}
{\bf Prove of the Theorem~\ref{Thm:2}.}\\
This proof refers to the book~\cite{2015Data}.\\
\begin{proof}
Let $\rho$, $\rho'$ denote the Lebesgue densities on $\mu$, $\mu^{'}$, respectively. Then
\begin{equation*}
\begin{aligned}
\rho(x) &= \Prob(x|y_{1:\TT})\propto\Prob(y_{1:\TT}, x)\propto\Prob(y_{1:\TT}|x)\Prob(x)\\
\rho'(x) &= \Prob(x|y'_{1:\TT})\propto\Prob(y'_{1:\TT}, x)\propto\Prob(y'_{1:\TT}|x)\Prob(x)\\
\end{aligned}
\end{equation*}

As the equation $\ref{I_filter}$ shows,
\begin{equation*}
\begin{aligned}
\Prob(y_{1:\TT}|x)&=\prod_{t=1}^\TT\Prob(y_{t}|x)\propto\prod_{t=1}^\TT\exp(-I_{filter, t})\\
&\propto\prod_{t=1}^\TT\exp(-\frac{1}{2}|y_{t}-H(x)|_{\Gamma}^2-\frac{1}{2}|x-\hat{\mu}_{t}|_{\hat{C}_{t}}^2)\notag\\
&\propto\exp(-\sum_{t=1}^\TT\frac{1}{2}|y_{t}-H(x)|^2_{\Gamma})\\
\Prob(y'_{1:\TT}|x)&\propto\prod_{t=1}^\TT\exp(-I_{filter, t})\propto\exp(-\sum_{t=1}^\TT\frac{1}{2}|y'_{t}-H(x)|^2_{\Gamma})
\end{aligned}
\end{equation*}
Let $\rho_0(x) = \Prob(x)$ denote the Lebesgue densities on $\mu_0$,
\begin{align*}
\varrho(x) = \sum_{t=1}^\TT\frac{1}{2}|y_{t}-H(x)|^2_{\Gamma}, \\
\varrho'(x) = \sum_{t=1}^\TT\frac{1}{2}|y'_{t}-H(x)|^2_{\Gamma}.
\end{align*}
Then
\begin{equation*}
\begin{aligned}
\rho(x) = \frac{1}{Z}\exp(-\sum_{t=1}^\TT\frac{1}{2}|y_{t}-H(x)|^2_{\Gamma})\rho_0(x) = \frac{1}{Z}\exp(-\varrho(x))\rho_0(x),\\
\rho'(x) = \frac{1}{Z'}\exp(-\sum_{t=1}^\TT\frac{1}{2}|y'_{t}-H(x)|^2_{\Gamma})\rho_0(x) = \frac{1}{Z'}\exp(-\varrho'(x))\rho_0(x),
\end{aligned}
\end{equation*}
where
\begin{align*}
Z = \int\exp(-\sum_{t=1}^\TT\frac{1}{2}|y_{t}-H(x)|^2_{\Gamma})\rho_0(x)dx, \\
Z' = \int\exp(-\sum_{t=1}^\TT\frac{1}{2}|y'_{t}-H(x)|^2_{\Gamma})\rho_0(x)dx.
\end{align*}
Thus,
\begin{equation*}
\begin{aligned}
d_{Hell}(\mu, \mu')^2 =&~\frac{1}{2}\int|\sqrt{\rho(x)}-\sqrt{\rho'(x)}|^2dx,\\
=&~\frac{1}{2}\int |\frac{\sqrt{\exp(-\varrho(x))}}{\sqrt{Z}}- \frac{\sqrt{\exp(-\varrho'(x))}}{\sqrt{Z'}}|^2\rho_0(x)dx,\\
\leqslant&~\frac{1}{2}\int\frac{1}{Z}|\sqrt{\exp(-\varrho(x))}- \sqrt{\exp(-\varrho'(x))}|^2\rho_0(x)dx\\
&+\frac{1}{2}|\frac{1}{\sqrt{Z}}-\frac{1}{\sqrt{Z'}}|^2\int\exp(-\varrho'(x))\rho_0(x)dx,\\
=&~\frac{1}{2Z}\int|\exp(-\varrho(x)/2)- \exp(-\varrho'(x)/2)|^2\rho_0(x)dx\\
&+\frac{1}{2Z}|\sqrt{Z'}-\sqrt{Z}|^2.\\
\end{aligned}
\end{equation*}
And we assume that $y, y'$ are both contained in a ball of radius $r$ in the Euclidean norm on $\mathbb{R}^{\TT\times q}$. Thus, $\rho_0$ is bounded independently of $r$, $\rho$ and $\rho'$ are bounded dependently of $r$. Thus there exists $K_1$ depended on $r$ satisfy:

\begin{equation}
\begin{aligned}
\frac{1}{2Z}|\sqrt{Z'}-\sqrt{Z}|^2
&=\frac{1}{2Z}\frac{|Z-Z'|^2}{|\sqrt{Z}+\sqrt{Z'}|^2}\\
&\leqslant K_1|Z-Z'|^2\\
&=K_1|\int\exp(-\varrho(x))\rho_0(x)dx- \int\exp(-\varrho'(x))\rho_0(x)dx|^2\\
&\leqslant K_1\int|\exp(-\varrho(x))-\exp(-\varrho'(x))|^2\rho_0(x)dx
\end{aligned}
\label{z2rho}
\end{equation}

Since $\varrho(x)>0$ and $\varrho'(x)>0$, using the equation $\ref{z2rho}$ and the fact that $\exp(-x)$ and $\exp(-x/2)$ are both Lipschitz on $\R^+$, there exists $K_2$ depended on $r$ satisfy
\begin{equation}
\begin{aligned}
d_{Hell}(\mu, \mu')^2 \leq&~\frac{1}{2Z}\int|\exp(-\varrho(x)/2)- \exp(-\varrho'(x)/2)|^2\rho_0(x)dx\\
&+K_1\int|\exp(-\varrho(x))-\exp(-\varrho'(x))|^2\rho_0(x)dx\\
\leq&~K_2\int|\varrho(x)-\varrho'(x)|^2\rho_0(x)dx\\
\leq&~K_2\int|\frac{1}{2}\sum_{t=1}^\TT\left(|y_{t}-H(x_t)|^2_{\Gamma}-|y'_{t}-H(x_t)|^2_{\Gamma}\right)|^2\rho_0(x)dx\\
\leq&~\frac{K_2}{4}\int\biggl(\sum_{t=1}^\TT|y_{t}-y'_{t}|_{\Gamma}\times|y_{t}-H(x_t)+y_{t}-H(x_t)|_{\Gamma}\biggr)^2\rho_0(x)dx\\
\leq&~K_2\int\biggl(\sum_{t=1}^\TT|y_{t}-y'_{t}|^2_{\Gamma}\biggr)\times\biggl(\sum_{t=1}^\TT|\frac{y_{t}+y'_{t}}{2}-H(x_t)|^2_{\Gamma}\biggr)\rho_0(x)dx\\
\end{aligned}
\end{equation}
Finally, we note that since all norms are equivalent on finite-dimensional spaces, there is constant $K_3$ such that
\begin{equation}
\sum_{t=1}^\TT|y_{t}-y'_{t}|^2_{\Gamma}\leq K_3|y_{1:\TT}-y'_{1:\TT}|^2
\end{equation}
Thus,
\begin{equation}
\begin{aligned}
d_{Hell}(\mu, \mu')^2 \leq&~K_2\int K_3|y_{1:\TT}-y'_{1:\TT}|^2\times\sum_{t=1}^\TT|\frac{y_{t}+y'_{t}}{2}-H(x_t)|^2_{\Gamma}\rho_0(x)dx\\
\leq&~K_2\int K_3|y_{1:\TT}-y'_{1:\TT}|^2\times\sum_{t=1}^\TT\big(r^2+|H(x_t)|^2\big)\rho_0(x)dx\\
\leq&~K_2K_3r^2|y_{1:\TT}-y'_{1:\TT}|^2\int\sum_{t=1}^\TT\big(1+|H(x_t)|^2\big)\rho_0dx\\
=&~K_2K_3r^2|y_{1:\TT}-y'_{1:\TT}|^2\mathbb{E}_{\mu_0}w(x_{1:\TT})
\end{aligned}
\end{equation}
Since $\mathbb{E}_{\mu_0}w(x_{1:\TT}) < \infty$, there exists $c = c(r)$ such that for all $|y_{1:\TT}|, |y'_{1:\TT}| < r$,
\begin{align}
d_{Hell}(\mu, \mu')\leqslant c|y_{1:\TT}-y'_{1:\TT}|.
\end{align}
\end{proof}

% % % % % % % % % % % % % % % % % % % % % % %
% % % % % % % % % % % % % % % % % % % % % % %
\section*{Acknowledgments}
This work is jointly supported by the National Key R \& D Program of China (No. 2018AAA0100303), the National Natural Science Foundation of China (No. 62072111), the Shanghai Municipal Science and Technology Major Project (No. 2018SHZDZX01) and the ZHANGJIANG LAB.

%\nolinenumbers

\end{document}